\definecolor{linkcolor}{rgb}{0.25,0.25,1}
\newtheorem{theorem}{Theorem}[section]
\newtheorem{lemma}[theorem]{Lemma}
\newtheorem{proposition}[theorem]{Proposition}
\newtheorem{corollary}[theorem]{Corollary}
\newtheorem{definition}[theorem]{Definition}
\newtheorem{remark}[theorem]{Remark}
\newtheorem{problem}{Problem}[section]
\def \Hm {\mathbb{H}}
\def \Imm {\mathbb{I}}
\def \Rm {\mathbb{R}}
\def\A{\mathcal{A}}
\def\C{\mathcal{C}}
\def\D{\mathcal{D}}
\def\F{\mathcal{F}}
\def\G{\mathcal{G}}
\def\H{\mathcal{H}}
\def\I{\mathcal{I}}
\def\M{\mathcal{M}}
\def\N{\mathcal{N}}
\def\O{\mathcal{O}}
\def\S{\mathcal{S}}
\def\V{\mathcal{V}}
\renewcommand{\div}{\nabla\cdot} 
\newcommand{\del}{\overline{\nabla}}
\newcommand{\wtA}{ {\widetilde A} }
\newcommand{\where}{\quad\text{ where }}
\newcommand{\qandq}{\quad\text{ and }\quad}
\newcommand{\rank}{\text{rank}}
\newcommand{\bfe}{ {\bf e}}
\newcommand{\bfg}{ {\bf g}}
\newcommand{\bfh}{ {\bf h}}
\newcommand{\tr}{ {\text{tr }}}
\newcommand{\cout}[1]{}
\newcommand{\dprod}[2]{\langle #1, #2 \rangle }
\def \dist { {\mbox{dist }}}
\title{Inverse anisotropic conductivity from power densities in dimension $n\ge 3$}
\author{Fran\c cois Monard\thanks{Department of Applied Physics and Applied Mathematics, Columbia University,  New York NY, 10027; fm2234@columbia.edu} \and Guillaume Bal\thanks{Department of Applied Physics and Applied Mathematics, Columbia University,  New York NY, 10027; gb2030@columbia.edu}}
\begin{document}

\maketitle

\begin{abstract} 
    We investigate the problem of reconstructing a fully anisotropic conductivity tensor $\gamma$ from internal functionals of the form $\nabla u\cdot\gamma\nabla u$ where $u$ solves $\nabla\cdot(\gamma\nabla u) = 0$ over a given bounded domain $X$ with prescribed Dirichlet boundary condition. This work motivated by hybrid medical imaging methods covers the case $n\ge 3$, following the previously published case $n=2$ \cite{Monard2011}. Under knowledge of enough such functionals, and writing $\gamma = \tau \tilde \gamma$ ($\det \tilde\gamma = 1$) with $\tau$ a positive scalar function, we show that all of $\gamma$ can be explicitely and locally reconstructed, with no loss of scales for $\tau$ and loss of one derivative for the anisotropic structure $\tilde\gamma$. The reconstruction algorithms presented require rank maximality conditions that must be satisfied by the functionals or their corresponding solutions, and we discuss different possible ways of ensuring these conditions for $\C^{1,\alpha}$-smooth tensors ($0<\alpha<1$).    
\end{abstract}

\section{Introduction}
Hybrid medical imaging methods aim to combine a high-resolution modality (such as acoustic waves or Magnetic Resonance Imaging) with a high-constrast one (e.g. Electrical Impedance Tomography, Optical Tomography, \dots) in order to improve the result of the latter thanks to a physical coupling. In this context, the problem we consider is motivated by a coupling between an elliptic equation (modelling conductivity or stationary diffusion) and acoustic waves. Namely we consider the problem of reconstructing a fully anisotropic conductivity (or diffusion) tensor $\gamma$ over a domain of interest $X\subset\Rm^n$ from knowledge of a certain number of {\em power density functionals} of the form $\H_\gamma [u](x) = \nabla u\cdot \gamma\nabla u (x)$, where $u$ solves the following partial differential equation
\begin{align}
    -\nabla\cdot(\gamma\nabla u) = -\sum_{i,j=1}^n \partial_i (\gamma^{ij} \partial_j u) = 0\quad (X), \quad u|_{\partial X} = g,
    \label{eq:conductivity}
\end{align}
where the boundary condition $g$ is prescribed. By polarization, we will see that {\em mutual power densities} of the form $\nabla u\cdot\gamma\nabla v$ will also be considered, where both $u$ and $v$ solve \eqref{eq:conductivity} with different boundary conditions. The model above, when considered as a diffusion model for photons in tissues, should be augmented with a term $\sigma_a u$ accounting for absorption and will be addressed in future work. The availability of such functionals is justified by a coupling with acoustic waves, as it is described in the context of Ultrasoud Modulated- Electrical Impedance Tomography (UMEIT) or Optical Tomography (UMOT) in \cite{Ammari2008,Bal2010d,Kuchment2010,Bal2011a} by considering acoustic deformations, or in the context of Impedance-Acoustic Computerized Tomography in \cite{Gebauer2009} by considering thermoelastic effects. In both cases, the acoustic waves come to the rescue of an otherwise very ill-posed problem (the classical Calder\'on's problem of recovering $\gamma$ from its Dirichlet-to-Neuman operator, see \cite{calderon}), by providing internal functionals instead of boundary ones. In \eqref{eq:conductivity}, we require $\gamma$ to have bounded components and to be {\em uniformly elliptic} as defined by the following condition
\begin{align}
    |\xi|^2 \kappa^{-1} \le \gamma(x)\xi\cdot\xi \le \kappa |\xi|^2, \quad x\in X, \xi\in\Rm^n,
    \label{eq:ellipticity}
\end{align}
from some $\kappa\ge 1$. Borrowing notation from \cite{Astala2005}, we denote $C(\gamma)$ the smallest such constant $\kappa$ and define the set
\begin{align}
    \Sigma(X) := \{ \gamma \in L^\infty(X), \quad C(\gamma)<\infty  \}.
    \label{eq:SigmaX}
\end{align}
With these definitions, our problem may be formulated as follows
\begin{problem}[Inverse conductivity from power density functionals]\label{prob:UMEIT}
    For $\gamma$ in $\Sigma(X)$ or any subset of it, does the power density measurement operator $\H_\gamma$ uniquely characterize $\gamma$ ? If yes, how stably ?
\end{problem}


The problem just described has received fair attention in the past few years. The first inversion formula for Problem \ref{prob:UMEIT} was given in \cite{Capdeboscq2009} in the isotropic, two-dimensional setting. There, a constructive algorithm as well as an optimal control approach for numerical reconstruction were presented. \cite{Kuchment2011a} then studied a linearized, isotropic version of Problem \ref{prob:UMEIT} in dimensions two and three with numerical implementation. 


Problem \ref{prob:UMEIT} has also been studied under constraints of limitations on the number of power densities available, the most restrictive case being the reconstruction of an isotropic tensor $\gamma = \sigma\Imm_n$ in \eqref{eq:conductivity} from only one measurement $H = \sigma|\nabla u|^2$. In this case, $\sigma$ may be replaced in \eqref{eq:conductivity} by $H/|\nabla u|^2$, and this yields the following non-linear partial differential equation 
\begin{align*}
    \nabla\cdot \left( \frac{H}{ |\nabla u|^2} \nabla u \right) = 0 \quad (X), \quad u|_{\partial X} = g.
\end{align*}
Newton-based methods were proposed in \cite{Gebauer2009} in order to successively reconstruct $u$ and $\sigma$, and the corresponding Cauchy problem was studied theoretically in \cite{Bal2012d}.


In search for explicit reconstruction formulas using larger numbers of functionals, the authors first extended the reconstruction result from \cite{Capdeboscq2009} to the three-dimensional, isotropic case in \cite{Bal2011a} with Bonnetier and Triki. This result was then generalized in \cite{Monard2011a} to $n$-dimensional, isotropic tensors with more general types of measurements of the form $\sigma^{2\alpha} |\nabla u|^2$ with $\alpha$ not necessarily $\frac{1}{2}$. This covers the case $\alpha=1$ of Current Density Impedance Imaging \cite{Nachman2011,Seo2011}. Finally, the same authors derived reconstruction formulas for the fully anisotropic two-dimensional problem and validated them numerically in \cite{Monard2011}.

In the last three papers presented, the explicit reconstruction algorithms were derived in the case where the power densities belong to $W^{1,\infty}(X)$, and assuming some qualitative properties satisfied by the solutions. In particular, the reconstruction algorithm for the isotropic case (or, equivalently, of a scalar factor multiplied by a known anisotropic tensor) strongly relies on the existence of $n$ solutions of \eqref{eq:conductivity} whose gradients form a basis of $\Rm^n$ at every point of the domain. Under such assumptions, stability estimates were derived for the reconstruction schemes proposed, of Lipschitz type for the determinant of the conductivity tensor under knowledge of the anisotropic structure $\tilde \gamma := (\det\gamma)^{-\frac{1}{n}} \gamma$, and of (less stable) H\"older type for the anisotropic structure $\tilde \gamma$. Finally, it was shown for certain types of tensors $\gamma$ that the assumption of linear independence made on the solutions could be guaranteed {\it a priori} by choosing appropriate boundary conditions, so that all the reconstruction procedures previously established could be properly implemented.


Studying a linearized version of Problem \ref{prob:UMEIT} from the pseudo-differential calculus standpoint, the Lipschitz stability mentioned above was also pointed out in \cite{Kuchment2011} in the isotropic case. There, the authors showed that from three power densities functionals, the linearized power density operator is an elliptic functional of an isotropic tensor $\sigma$. They also studied in more detail the ``stabilizing'' nature of internal functionals of certain kinds that have arisen in all the hybrid medical imaging methods mentioned above. An extension of this result to the anisotropic case is presently investigated by the authors with Guo in \cite{Bal2012e}.

The present work aims at unifying and extending the work done in \cite{Capdeboscq2009,Bal2011a,Monard2011a,Monard2011} by treating in full extent the anisotropic, $n$-dimensional case of Problem \ref{prob:UMEIT} for $\C^{1,\alpha}$-smooth conductivities with $0<\alpha<1$ (the H\"older exponent is required by forward elliptic theory). The basis of this work also appears and will strongly rely on the first author's recent thesis \cite{Monard2012b}.

\section{Statement of the main results} \label{sec:statement}

We decompose the conductivity tensor $\gamma$ into the product of a scalar factor $\tau := (\det \gamma)^{\frac{1}{n}}$ and a scaled anisotropic structure $\tilde\gamma$:
\begin{align}
    \gamma := \tau \tilde\gamma, \qquad \tau:= (\det\gamma)^{\frac{1}{n}}, \quad \det\tilde\gamma = 1.
    \label{eq:gammadecomp}
\end{align}
Note that when $\gamma\in\Sigma(X)$, $\tau$ is uniformly bounded above and below by $C(\gamma)$ and $C(\gamma)^{-1}$, respectively.

Under knowledge of enough power densities inside the domain, the reconstructibility of $\tau$ and/or $\tilde\gamma$ are local questions, since under certain conditions described below, both quantities $\tau$ and $\tilde\gamma$ can be explicitely and locally recovered in terms of power densities and their derivatives. We first describe these conditions and the corresponding recosntruction formulas in the next paragraph. 

Second, as the reconstruction algorithms presented above require local conditions, we will describe how to control these conditions from the domain's boundary, also tackling the question of global reconstruction. 

\subsection{Local reconstruction algorithms}

\paragraph{Reconstruction of the scalar factor $\tau$ knowing $\tilde\gamma$: }

We first consider the question of local reconstructibility of the scalar factor $\tau$ under knowledge of the anisotropic structure $\tilde\gamma$. The main hypothesis here is that we use the mutual power densities $H_{ij} := \nabla u_i\cdot\gamma\nabla u_j$ (for $1\le i,j\le n$) of $n$ solutions $(u_1,\dots,u_n)$ of \eqref{eq:conductivity} whose gradients are {\em linearly independent} over a subdomain $\Omega\subset X$, a condition which we formulate as 
\begin{align}
    \inf_{x\in\Omega} |\det (\nabla u_1,\dots,\nabla u_n)| \ge c_0 >0.
    \label{eq:positivityn}
\end{align}
Under this assumption we are able to derive the following reconstruction formula: defining $A = \gamma^{\frac{1}{2}}$ to be the positive matrix squareroot of $\gamma$, and decomposing $A$ into $A = \sqrt{\tau} \wtA$ with $\det\wtA = 1$, knowledge of $\tilde\gamma$ implies knowledge of $\wtA$. Further defining $S_i:= A\nabla u_i$ for $1\le i\le n$, the data becomes $H_{ij} := S_i\cdot S_j$. Such vector fields satisfy the following PDE's
\begin{align}
    d (\wtA^{-1}S_i)^\flat = d\log\tau \wedge (\wtA^{-1}S_i)^\flat \qandq \nabla\cdot (\wtA S_i) = - \nabla\log\tau\cdot\wtA S_i, \quad 1\le i\le n,
    \label{eq:PDESi}
\end{align}
where the equality of two-forms expresses the fact that $d (A^{-1} S_i)^\flat = d^2 u_i = 0$ (exact forms are closed), and the scalar equality is deduced from the conductivity equation. Here the $^\flat$ exponent denotes the {\em flat} (or index-lowering) operator for the Euclidean metric. From these PDE's, one can derive the following formula, first established in \cite[Lemma 4.3.1]{Monard2012b} as a generalization of earlier results in \cite{Bal2011a,Capdeboscq2009,Monard2011,Monard2011a}:
\begin{align}
    \nabla\log\tau = \frac{2}{n} |H|^{-\frac{1}{2}} \left( \nabla (|H|^{\frac{1}{2}}H^{jl})\cdot\wtA S_l \right)\wtA^{-1} S_j = \frac{1}{n} \nabla\log |H| + \frac{2}{n} (\nabla H^{jl}\cdot\wtA S_l)\wtA^{-1}S_j, \quad x\in\Omega. 
    \label{eq:gradlogtau}
\end{align}

Equation \eqref{eq:gradlogtau} may thus be used to substitute $\nabla\log\tau$ into the PDE's \eqref{eq:PDESi}, and the resulting system becomes closed for the frame $S \equiv (S_1,\dots,S_n)$. We then show that such a system may be rewritten as a first-order quasilinear system of the form 
\begin{align}
    \del S_i = \S_i (S, H, dH, \wtA, d\wtA), \quad 1\le i\le n, \quad x\in \Omega,
    \label{eq:gradSi}
\end{align}
where $\S_i$ is a Lipschitz functional of the components of the frame $S$. Here $\del S_i$ denotes the total covariant derivative of the vector field $S_i$, a tensor field of type $(1,1)$ that encodes all partial derivatives $\partial_p S_i^q$. System \eqref{eq:gradSi} can thus be integrated over any curve to reconstruct the value of $S$ from knowledge of $S(x_0)$ for fixed $x_0\in \Omega$. Once $S$ is known throughout $\Omega$, $\tau$ can be reconstructed throughout $\Omega$ by integrating \eqref{eq:gradlogtau} in a similar fashion. The PDE's \eqref{eq:gradlogtau} and \eqref{eq:gradSi} are overdetermined and come with compatibility conditions which should hold as long as our measurements are in the range of the measurement operator. In such a case, this leads to a unique and stable reconstruction in the sense of the following proposition, first stated in \cite[Prop. 4.3.6-4.3.7]{Monard2012b}:

\begin{proposition}[Local stability for $\log\tau$]\label{prop:stabtau}
    Consider two tensors $\gamma = \tau \wtA^2$ and $\gamma' = \tau' \wtA^{'2}$ in $\Sigma(X)$, where $\wtA$ and $\wtA'$ are known and with components in $W^{1,\infty}(X)$. Let $\Omega\subset X$ such that the positivity \eqref{eq:positivityn} holds for two sets of conductivity solutions $(u_1,\cdots,u_n)$ and $(u'_1,\cdots,u'_n)$ with respective conductivities $\gamma$ and $\gamma'$, call their corresponding data sets $\{H_{ij}, H'_{ij}\}$ with components in $W^{1,\infty}(X)$. Then the functions $\log\tau$ and $\log\tau'$ can be uniquely reconstructed with the following stability estimate
    \begin{align*}
	\|\log \tau - \log \tau'\|_{W^{1,\infty}(\Omega)} \le \varepsilon_0 + C\left(\|H-H'\|_{W^{1,\infty}(X)} + \|\wtA-\wtA'\|_{W^{1,\infty}(X)}\right),
    \end{align*}
    where the constant $C$ does not depend on $\Omega$ and $\varepsilon_0$ is the error committed at some $x_0\in\Omega$.    
\end{proposition}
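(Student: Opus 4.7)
The plan is to derive from \eqref{eq:PDESi} and \eqref{eq:gradlogtau} a closed quasilinear first-order system for the frame $S=(S_1,\dots,S_n)$, compare the two frames $S, S'$ via a Gr\"onwall argument along curves in $\Omega$, and finally convert the resulting $L^\infty$-control on $S-S'$ into the advertised $W^{1,\infty}$-estimate on $\log\tau-\log\tau'$ by invoking \eqref{eq:gradlogtau} once more.

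First, substituting \eqref{eq:gradlogtau} into \eqref{eq:PDESi} puts the system in the form \eqref{eq:gradSi}, whose right-hand side $\S_i$ is an algebraic function of $S$, $H$, $dH$, $\wtA$, $d\wtA$. Since $|\det(S_1,\dots,S_n)|^2=\det H$, the positivity assumption \eqref{eq:positivityn} together with the ellipticity bound on $\tau$ guarantee $\det H\ge c>0$ uniformly on $\Omega$, so $\S_i$ is Lipschitz in $S$ on the relevant range; its Lipschitz constant is controlled by $c_0$, $C(\gamma)$ and the $W^{1,\infty}$-norms of $H$ and $\wtA$, none of which depend on $\Omega$. The analogous construction for the primed data gives the companion system with Lipschitz constant controlled by the same quantities for $(H',\wtA')$.

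Second, fix $x_0\in\Omega$ and for any $x\in\Omega$ select a smooth curve $\bx(\cdot)\subset\Omega$ connecting $x_0$ to $x$ of length bounded by $\diam$. Restricting \eqref{eq:gradSi} to the curve gives an ODE for $t\mapsto S_i(\bx(t))$; subtracting the analogous equation for $S'$ and bounding $|\S_i(S,H,dH,\wtA,d\wtA)-\S_i(S',H',dH',\wtA',d\wtA')|$ by $L|S-S'|+C(\|H-H'\|_{W^{1,\infty}}+\|\wtA-\wtA'\|_{W^{1,\infty}})$ yields an integral inequality to which Gr\"onwall applies, giving
\begin{align*}
    \|S-S'\|_{L^\infty(\Omega)}\le e^{L\diam}\Big(|(S-S')(x_0)|+C\|H-H'\|_{W^{1,\infty}(X)}+C\|\wtA-\wtA'\|_{W^{1,\infty}(X)}\Big).
\end{align*}
Feeding this back into \eqref{eq:gradlogtau}, which expresses $\nabla\log\tau$ as an explicit algebraic function of $S$, $H$, $dH$ and $\wtA$, produces a pointwise bound of the same form on $\nabla\log\tau-\nabla\log\tau'$; a further integration along the curve controls $\log\tau-\log\tau'$ pointwise, with the errors at $x_0$ (on $S$ and on $\log\tau$) absorbed into the single term $\varepsilon_0$.

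The main obstacle is the algebraic preparatory step that underpins \eqref{eq:gradSi}: one must verify that after using \eqref{eq:gradlogtau} to eliminate $\nabla\log\tau$, the PDE \eqref{eq:PDESi}---which only supplies one scalar (divergence) and one two-form (closedness) relation per vector field---actually pins down the full $(1,1)$-tensor $\del S_i$. This is precisely where the frame condition \eqref{eq:positivityn} is needed, as in \cite[Lemma 4.3.1]{Monard2012b}: expanding $\del S_i$ in the dual frame to $S$ and identifying its symmetric and antisymmetric parts from the given scalar and two-form relations (together with the compatibility relations obtained by cross-using the equations for the other $S_j$) recovers $\del S_i$ explicitly, with Lipschitz dependence on $S$, $H$, $dH$, $\wtA$, $d\wtA$; the quantitative control of this inversion by $c_0$ is precisely what ensures that the constant $C$ in the final estimate is independent of $\Omega$.
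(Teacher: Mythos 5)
Your proposal follows essentially the paper's own route: derive \eqref{eq:gradlogtau}, eliminate $\nabla\log\tau$ to obtain the closed quasilinear system \eqref{eq:gradSi} with right-hand side Lipschitz in $S$ (the Koszul-type step you flag as the ``main obstacle'' is exactly how the paper, following \cite{Monard2012b}, trades derivatives of $S_i$ for $dH$ and $d\wtA$ using the known Lie brackets), then integrate along curves with a Gr\"onwall argument to control $S-S'$, and finally recover $\nabla\log\tau-\nabla\log\tau'$ and $\log\tau-\log\tau'$ from \eqref{eq:gradlogtau}, absorbing the errors at $x_0$ into $\varepsilon_0$. The only point to phrase carefully is that the curve lengths (hence the Gr\"onwall exponential) must be bounded in terms of $X$ rather than the geometry of $\Omega$, so that the constant $C$ is indeed independent of $\Omega$ as claimed.
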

Such a stability statement shows that under the condition \eqref{eq:positivityn}, the reconstruction of $\tau|_\Omega$ is a {\em well-posed} problem in $W^{1,\infty}(\Omega)$. Section \ref{sec:tau} contains the proofs of equations \eqref{eq:gradlogtau} and \eqref{eq:gradSi}. 

\paragraph{Reconstruction of the anisotropic structure $\tilde\gamma$, then of $\tau$: }

Here and below, we denote by $M_n(\Rm)$ the space of $n\times n$ matrices with its inner product structure $\dprod{A}{B} := A_{ij}B_{ij} = \tr (AB^T)$. We now derive an approach to reconstruct the anisotropic structure $\tilde\gamma$ from additional measurements. We start from a basis of solutions $(u_1,\dots,u_n)$ satisfying \eqref{eq:positivityn} over $\Omega\subset X$. Considering an additional conductivity solution $v$, we show that, although the solutions $(u_1,\dots,u_n,v)$ are themselves unknown, the decomposition of $\nabla v$ in the basis $(\nabla u_1,\dots,\nabla u_n)$ is {\em known from the power densities}. Combining these equations with the PDE's satisfied by the solutions allows to derive linear orthogonality constraints on the product matrix $\wtA S$ ($S$ here denotes the matrix with columns $S_1,\dots,S_n$). 

Thus, any additional solution $v$, by means of its power densities with the support basis, gives rise to a subspace $\V \subset M_n(\Rm)$ orthogonal to $\wtA S$, moreover a basis of $\V$ is known from the data. The dimension of $\V$ is accurately controlled in \cite[Prop. 4.3.8]{Monard2012b} and its maximal value is 
\begin{align}
    \dim \V\le d_M := 1 + n(n+1)/2. 
    \label{eq:dm}
\end{align}
The matrix $\wtA S$ is arbitrary in $M_n(\Rm)$ except for its determinant, known up to sign, thus $\wtA S$ requires $n^2-1$ independent constraints to be determined up to sign. This requires that we consider enough additional solutions $v_1,\dots,v_l$ such that their corresponding spaces $\V_1,\dots,\V_l$ satisfy (i) $\dim \sum_{i=1}^l \V_i = n^2-1$, and (ii) $\left(\sum_{i=1}^l \V_i\right)^\perp$ is spanned by a non-singular matrix (this condition should always hold true when measurements aren't noisy, as this orthogonal space is nothing but $\Rm \wtA S$). In mathematical terms, the proper condition to satisfy is as follows: for $1\le i\le l$, let $M_{(i)1},\dots,M_{(i)d_M}$ span $\V_i$ (they can be constructed from the data), and denote
\begin{align}
    \M := \{ M_{(i)j}\ |\ 1\le i\le l, \quad 1\le j\le d_M  \}, \quad \#\M = d_M l,
    \label{eq:Mfam}
\end{align}
rewritten more simply as $\M = \{M_i\ |\ 1\le i\le d_M l \}$ below. Conditions (i) and (ii) mentioned above will hold if for $x\in\Omega$, there exists an $n^2-1$-subfamily of $\M$ with nonzero hypervolume. With the notion of cross-product in Appendix \ref{app:crossprod}, this condition may be written under the form
\begin{align}
    \inf_{x\in \Omega} \sum_{I\in\I(n^2-1,\#\M)} (\det (\N(I)H^{-1}\N(I)))^\frac{1}{n} \ge c_1 >0,
    \label{eq:hyperplane}
\end{align}
for some constant $c_1$, where $\I(n^2-1,d_M l)$ denotes the set of increasing injections from $[1,n^2-1]$ to $[1,d_M l]$ (i.e. $I\in \I(n^2-1,d_M l)$ is of the form $I = (i_1,\dots,i_{n^2-1})$ with $1\le i_1<\dots<i_{n^2-1}\le d_M l$), and where $\N(I) = \N(M_{i_1},\dots,M_{i_{n^2-1}})$ is the cross-product defined in Appendix \ref{app:crossprod}. Under condition \eqref{eq:hyperplane}, we are able to reconstruct $\tilde\gamma$ and $\nabla \log\tau$ via formulas \eqref{eq:gammarecons} and \eqref{eq:gradlogtaurecons}. This reconstruction is unique and stable in the sense of the proposition below. 

\begin{proposition}[Local stability for $\tilde\gamma$ and $\log\tau$]\label{prop:stabgamma}
    Consider two tensors $\gamma = \tau \tilde\gamma$ and $\gamma' = \tau' \tilde\gamma'$ in $\Sigma(X)$. Let $\Omega\subset X$ where $u_1,\dots,u_n, v_1,\dots,v_l$ and $u_1',\dots,u_n', v_1',\dots,v_l'$ satisfy conditions \eqref{eq:positivityn} and \eqref{eq:hyperplane}. Then $\gamma$ and $\gamma'$ are uniquely reconstructed from knowledge of the power densities of the above sets of solutions, and we have the following stability estimate
    \begin{align}
	\|\nabla(\log\tau-\log\tau')\|_{L^{\infty}(\Omega)} + \|\tilde\gamma-\tilde\gamma'\|_{L^\infty(\Omega)} \le C \|H-H'\|_{W^{1,\infty}(\Omega)}.
	\label{eq:stabgamma}
    \end{align}
\end{proposition}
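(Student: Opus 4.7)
The plan is to express $\tilde\gamma$ and $\nabla\log\tau$ as explicit algebraic functionals of $H$ and its first derivatives, then derive the stability estimate by tracking the Lipschitz dependence of these functionals on $H$ in $W^{1,\infty}(\Omega)$. All reconstruction is pointwise once the formulas are in hand.

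For the reconstruction of $\tilde\gamma$, fix $x\in\Omega$. Condition \eqref{eq:positivityn} implies that the frame $S(x)=(S_1(x),\ldots,S_n(x))$ is nondegenerate and $H(x)=S(x)^T S(x)$ is invertible. Each additional solution $v_i$ produces a subspace $\V_i\subset M_n(\Rm)$ orthogonal to $\wtA(x) S(x)$, spanned by matrices $M_{(i)j}$ explicitly constructible from $H$ and $dH$ (via the decomposition of $\nabla v_i$ in the frame $\nabla u_1,\ldots,\nabla u_n$ together with the conductivity equation). The hyperplane condition \eqref{eq:hyperplane} guarantees, at each $x$, the existence of an $(n^2-1)$-subfamily $I\in\I(n^2-1,\#\M)$ whose cross-product $\N(I)$ is nondegenerate; then $\N(I)=c(x)\,\wtA(x)S(x)$ for some scalar $c(x)\ne 0$ whose magnitude is fixed by $|\det\N(I)|=|c|^n\sqrt{\det H}$. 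Using the identity
\begin{equation*}
    \tilde\gamma = (\wtA S)\,H^{-1}\,(\wtA S)^T
\end{equation*}
(which follows from $S H^{-1} S^T = I$ and is invariant under $\wtA S\mapsto -\wtA S$), substituting $\wtA S = \N(I)/c$ yields an explicit pointwise formula $\tilde\gamma = \Phi_I(H,dH)$, which is \eqref{eq:gammarecons}. With $\wtA=\tilde\gamma^{1/2}$ then known, $S=\wtA^{-1}(\wtA S)$ follows, and \eqref{eq:gradlogtau} expresses $\nabla\log\tau$ as a rational function of $H$, $dH$, $\wtA$ and $S$, giving \eqref{eq:gradlogtaurecons}.

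For the stability estimate, I would cover $\Omega$ by finitely many open pieces on each of which a fixed index $I$ witnesses \eqref{eq:hyperplane} uniformly for both $H$ and $H'$; such a cover exists from the uniform lower bound $c_1$ and a compactness argument, provided $\|H-H'\|_{W^{1,\infty}}$ is small enough that the relevant cross-product determinants stay bounded away from zero for the perturbed tensor as well. On each piece the reconstruction map is a composition of the multilinear cross-product, inversion of $H$ (bounded by ellipticity), the principal matrix square root on symmetric positive definite matrices bounded away from singularity, and polynomial operations — each factor is Lipschitz on the relevant bounded set. Assembling the pointwise Lipschitz bounds and taking suprema yields \eqref{eq:stabgamma}.

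The main obstacle is the uniformity of the index selection $I$, which a priori depends both on $x$ and on which tensor ($\gamma$ or $\gamma'$) one is computing with. One must simultaneously control the nondegeneracy of $\N(I)$ for both data sets on each cover element, and verify that the various local reconstruction formulas $\Phi_I$ agree (modulo the sign of $\N(I)$, which the identity above absorbs) so that the resulting constant in \eqref{eq:stabgamma} is independent of the cover. This is where the precise multilinear structure of the cross-product and the sign-invariance of the formula for $\tilde\gamma$ are essential.
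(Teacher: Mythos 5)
Your reconstruction outline is right in spirit, but the stability argument diverges from the paper at the decisive point and, as written, does not close. The paper does not reconstruct $\tilde\gamma$ from a single well-chosen subfamily $I$: formula \eqref{eq:gammarecons} sums $\N(I)H^{-1}\N(I)^T$ over \emph{all} $I\in\I(n^2-1,d_M l)$ and divides by the normalization $\F$ of \eqref{eq:F}, and hypothesis \eqref{eq:hyperplane} is precisely the statement $\inf_\Omega \F\ge c_1$. With this averaged formula, $\tilde\gamma$ and $\tilde\gamma'$ are values of one and the same globally defined functional of $(H,\nabla H)$, resp.\ $(H',\nabla H')$, whose denominator is bounded below for \emph{both} data sets by assumption; the estimate then follows directly (with constants degrading like $c_1^{-1}$, resp.\ $c_1^{-2}$), and likewise for \eqref{eq:gradlogtaurecons}, where the quadratic occurrence of $\N(I)$ removes the sign ambiguity and $\tilde\gamma^{-1}$ has entries polynomial in those of $\tilde\gamma$ because $\det\tilde\gamma=1$, so no matrix square root is needed. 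Note also that your pointwise single-index formula $\Phi_I$ is not \eqref{eq:gammarecons}, and that \eqref{eq:hyperplane} bounds the sum $\F$ from below, not any individual term.

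The genuine gap is exactly the obstacle you name and leave open: a subfamily $I$ that is nondegenerate for $H$ at $x$ may be degenerate for $H'$ at the same $x$, so $\Phi_I(H)-\Phi_I(H')$ cannot be estimated there; appealing to smallness of $\|H-H'\|_{W^{1,\infty}}$ is not licensed by the statement (it could be repaired by the trivial dichotomy, since the left-hand side of \eqref{eq:stabgamma} is a priori bounded, but you do not carry this out). Your covering step is also shaky on its own terms: the quantities $\det(\N(I)H^{-1}\N(I)^T)$ depend on $\nabla H$, which is only $L^\infty$ and need not be continuous, and $\Omega$ is not assumed precompact, so a finite cover on which a fixed $I$ works uniformly for both data sets is not available. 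All of these difficulties are bypassed by the summed formulas \eqref{eq:gammarecons} and \eqref{eq:gradlogtaurecons}, which is the paper's route and the idea your proposal is missing.
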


\begin{remark}
    Although Proposition \ref{prop:stabtau} required bounded derivatives on the anisotropic structures $\tilde\gamma$, this is no longer the case here as the frame $S$ is reconstructed algebraically instead of solving a differential system that involves derivatives of the anisotropic structure. This is in good agreement with the fact that the stability statement \eqref{eq:stabgamma} is only stated in $L^\infty$-norm for $\tilde\gamma$. 
\end{remark}

\begin{remark}
    The scalar factor $\tau$ is reconstructed with better stability than the anisotropic structure $\tilde\gamma$, for which there is locally a loss of one derivative. Although the reconstruction procedure presented was not yet proven optimal in terms of number of power densities involved, this loss of one derivative cannot be avoided and finds justification in the microlocal analysis of the linearized problem that will appear on future work \cite{Bal2012e}.
\end{remark}

\paragraph{Local reconstructibility:} In the light of the local reconstruction algorithms previously derived, a tensor $\gamma\in\Sigma(X)$ is {\em locally reconstructible from power densities} if for every $x\in X$, there exists a neighborhood $\Omega_x\ni x$ and $n+l$ boundary conditions $(g_1,\dots,g_n,h_1,\dots,h_l)$ such that the corresponding $n$ first conductivity solutions satisfy condition \eqref{eq:positivityn} and the $l$ remaining ones satisfy condition \eqref{eq:hyperplane} (which then ensures via Proposition \ref{prop:stabgamma} that $\gamma$ is uniquely and stably reconstructible over $\Omega_x$). Based on the Runge approximation for elliptic equations \cite{Lax1956}, we then have the following generic result:

\begin{theorem}[Local reconstructibility of $\C^{1,\alpha}$ tensors, $\alpha>0$] \label{thm:local}
    If $\gamma\in \C^{1,\alpha}(\overline{X})$, then $\gamma$ is locally reconstructible from power densities.     
\end{theorem}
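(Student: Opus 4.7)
The plan is to fix a point $x_0 \in X$ and to construct local solutions of \eqref{eq:conductivity} near $x_0$ whose associated data realize the rank conditions \eqref{eq:positivityn} and \eqref{eq:hyperplane} at $x_0$; then to transfer these conditions to genuinely global solutions via a Runge-type approximation argument, exploiting the fact that both conditions are open under $\C^1$-perturbations of the involved gradients and power densities.

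For the local construction, I would freeze the coefficients at $x_0$ and work with the constant-coefficient operator $L_0 = \gamma(x_0):D^2$. Every affine function is $L_0$-harmonic with constant gradient, so choosing $n$ affine functions $\bar u_i(x) = e_i\cdot(x-x_0)$ whose gradients $(e_i)$ form a basis of $\Rm^n$ already guarantees \eqref{eq:positivityn} at $x_0$ for the frozen problem. For \eqref{eq:hyperplane}, I would augment the family by $l$ quadratic polynomials chosen in the $(n(n+1)/2-1)$-dimensional space of $L_0$-harmonic quadratics and verify that, for a generic such choice, the associated constraint spaces $\V_1,\dots,\V_l$ span an $(n^2-1)$-dimensional subspace of $M_n(\Rm)$ whose orthogonal complement is spanned by a nonsingular matrix. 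I would then solve \eqref{eq:conductivity} on a ball $B(x_0,r)\subset X$ using these polynomials as Dirichlet data; interior Schauder estimates for $\C^{1,\alpha}$ coefficients, combined with the smallness of $\gamma - \gamma(x_0)$ on $B(x_0,r)$ for small $r$, yield $\C^{2,\alpha}$ solutions which are $\C^1$-close to the frozen polynomials on $B(x_0,r/2)$, so both rank conditions persist at $x_0$ and in a neighborhood of $x_0$ for the true variable-coefficient solutions.

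The globalization step invokes the Runge approximation theorem of Lax \cite{Lax1956}: any solution of \eqref{eq:conductivity} on $B(x_0,r/2)$ is an $L^2$-limit of restrictions of global solutions on $X$ with appropriate Dirichlet data on $\partial X$. Because the difference of two solutions satisfies the same homogeneous equation, interior elliptic regularity upgrades this to a $\C^1$-approximation on any compactly contained subball, say $B(x_0,r/4)$. Choosing the approximation sufficiently fine, the global solutions $u_1,\dots,u_n,v_1,\dots,v_l$ thus obtained inherit the two rank conditions on a neighborhood $\Omega_{x_0}\subset B(x_0,r/4)$ of $x_0$, which is exactly what the definition of local reconstructibility at $x_0$ requires.

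The main difficulty I anticipate lies in the algebraic step at the frozen problem. The dimension count is favorable---each additional $L_0$-harmonic quadratic contributes up to $n(n+1)/2 - 1$ free coefficients against a total budget of $n^2 - 1$ independent constraints---but one must exhibit an explicit family of polynomial solutions whose constraint spaces $\V_1,\dots,\V_l$ sum to a hyperplane in $M_n(\Rm)$ whose orthogonal complement is spanned by $\wtA(x_0)S(x_0)$ rather than by some singular matrix. Unwinding the construction of the $\V_i$ leading to \eqref{eq:hyperplane} in the constant-coefficient model and appealing to genericity of the quadratic coefficients should close this gap, but verifying attainability together with uniformity over a small neighborhood is the only nontrivial piece of bookkeeping in the argument.
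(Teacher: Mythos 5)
Your overall strategy is the same as the paper's: freeze the coefficients at $x_0$, build explicit polynomial solutions (affine plus $\gamma(x_0)$-harmonic quadratics) satisfying \eqref{eq:positivityn} and \eqref{eq:hyperplane}, pass to the variable-coefficient problem on a small ball, and finish with Runge approximation plus interior elliptic estimates. However, there are two genuine gaps. First, the step you defer to ``genericity'' is precisely the mathematical crux of the theorem, and it is not closed by a dimension count: you must exhibit quadratics whose constraint spaces $\V_1,\dots,\V_l$ sum to a hyperplane of $M_n(\Rm)$ whose orthogonal complement is spanned by a \emph{nonsingular} matrix, and an appeal to genericity presupposes that the nonvanishing locus is nonempty, which in practice requires producing one explicit example anyway. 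The paper does exactly this: it takes $l=n-1$ quadratics $u^0_{n+j}(x)=\frac{1}{2}(x-x_0)\cdot Q_j(x-x_0)$ with $Q_j=A_0^{-1}(\bfe_j\otimes\bfe^j-\bfe_{j+1}\otimes\bfe^{j+1})A_0^{-1}$, computes $Z_j=Q_j$ and $H^0=\gamma_0$, and checks that $\sum_j\V_j$ is the orthogonal complement of $\Rm\gamma_0$, i.e.\ the hyperplane of matrices $M$ with $\tr(\gamma_0^{-1}\cdot)$-type constraint, whose normal $\gamma_0$ is positive definite; moreover the resulting lower bound $c_1$ is independent of $x$, which is what gives the uniformity over a neighborhood that you flag as ``bookkeeping.'' Without this explicit verification your argument is not complete.

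Second, your perturbation topology is too weak. You only ask for $\C^1$-closeness of the variable-coefficient solutions to the frozen polynomials, and later only a $\C^1$-upgrade of the Runge $L^2$-approximation. But condition \eqref{eq:hyperplane} is a nonvanishing condition on $\F(H,\nabla H)$: the matrices $Z$ are gradients of ratios of determinants of power densities, so they involve first derivatives of $H$, hence \emph{second} derivatives of the solutions. $\C^1$-closeness controls $H-H^0$ in $L^\infty$ but not $\nabla H-\nabla H^0$, so \eqref{eq:hyperplane} need not persist under your stated approximation. The paper handles this by proving $\C^2$-smallness: it interpolates the maximum-principle bound with the global Schauder ($\C^{2,\alpha}$) bound to get $\|u^r_i-u^0_i\|_{\C^2}\to 0$ as $r\to 0$, and after Runge it chains De Giorgi--Nash--Moser interior estimates with interior Schauder estimates to convert the $L^2$-approximation into a $\C^2$-approximation on a smaller concentric ball. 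Your Schauder ingredients can deliver this, but as written the claim is stated in the wrong norm and the continuity argument for $\F$ does not go through.
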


\begin{remark}
    In a similar manner to \cite{Bal2012}, the proof of Theorem \ref{thm:local} is based on constructing solutions locally, that will fulfill conditions \eqref{eq:positivityn}-\eqref{eq:hyperplane}, after which such solutions will be approximated using the Runge approximation by solutions of \eqref{eq:conductivity} globally defined over $X$ and controlled from the boundary. Although this result establishes local reconstructibility for a large class of tensors, the applicability remains limited insofar as the boundary conditions are not explicitely constructed. 
\end{remark}

\subsection{Global reconstructions}

The previous approach consisted in deriving explicit reconstruction algorithms under certain {\it a priori} conditions (linear independence or rank maximality) satisfied by a certain number of solutions of \eqref{eq:conductivity}. These conditions may be checked directly on the power densities at hand. As the user only has control over boundary conditions in this problem, it is thus appropriate to define sets of {\em admissible boundary conditions}, for which the conditions mentioned above are satisfied globally. 

The first admissibility set is that of $m$-tuples of boundary conditions ($m\ge n$) such that, locally, $n$ of the $m$ solutions of \eqref{eq:conductivity} have linearly independent gradients. This ensures that the scalar factor $\tau$ is uniquely and stably reconstructible throughout the domain. For $\gamma\in\Sigma(X)$, we call such an admissibility set $\G_\gamma^m$ ($m\ge n$), subset of $(H^{\frac{1}{2}}(\partial X))^m$, whose full definition is given in Def. \ref{def:Ggammam}.

On to the global reconstruction of $(\tilde\gamma, \tau)$, Definition \ref{def:Agammamk} constructs a second set of admissible boundary conditions. We first pick $\bfg\in\G_\gamma^m$ for some $m\ge n$ so that a basis of gradients of solutions may be available everywhere. Considering $l\ge 1$ additional solutions generated by boundary conditions $\bfh = (h_1,\dots,h_l)$, we say that $\bfh$ belongs to $\A_\gamma^{m,l}(\bfg)$ if the spaces $(\V_1,\dots,\V_l)$, generated by $(v_1,\dots,v_l)$ as in the previous section, form everywhere a hyperplane of $M_n(\Rm)$, so that $\tilde\gamma$ and $\tau$ may be reconstructed with the stability of Proposition \ref{prop:stabgamma}.

While the construction of these sets is somewhat tedious, they allow us to define sufficient conditions for global reconstructibility of all or part of $\gamma$. In particular, they allow us to reformulate a reconstructibility statement into a non-emptiness statement on sets of admissible boundary conditions $\G_\gamma$ or $\A_\gamma$, which are characterized by continuous functionals of power densities. Namely, for a tensor $\gamma = \tau\tilde\gamma$, we have 

\begin{theorem}[Global reconstructibility] \label{thm:reconstructibility}
    \begin{enumerate}
	\item Under knowledge of a $\C^1$-smooth $\tilde\gamma$, the function $\tau\in W^{1,\infty}(X)$ is uniquely reconstructible if $\G_{\gamma}^m \ne\emptyset$ for some $m\ge n$, with a stability estimate of the form
	    \begin{align}
		\|\log\tau-\log\tau'\|_{W^{1,\infty}(X)} \le C \left( \|H-H'\|_{W^{1,\infty}(X)} + \|\tilde\gamma-\tilde\gamma'\|_{W^{1,\infty}(X)} \right).
		\label{eq:stabtauglobal}
	    \end{align}
	\item $\gamma$ is uniquely reconstructible if there exists $m\ge n$ and $l\ge 1$ such that $\G_{\gamma}^m\ne \emptyset$ and $\A_{\gamma}^{m,l}(\bfg)\ne\emptyset$ where $\bfg\in\G_\gamma^m$, with the stability estimate
	    \begin{align}
		\|\tilde\gamma-\tilde\gamma'\|_{L^\infty(X)} + \|\log\tau - \log\tau'\|_{W^{1,\infty}(X)} \le C \|H-H'\|_{W^{1,\infty}(X)}. 
		\label{eq:stabgammaglobal}
	    \end{align}
    \end{enumerate}
\end{theorem}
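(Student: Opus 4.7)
The plan is to globalize Propositions \ref{prop:stabtau} and \ref{prop:stabgamma} via a compactness-based gluing argument. The admissibility sets $\G_\gamma^m$ and $\A_\gamma^{m,l}(\bfg)$ are defined precisely so that the local conditions \eqref{eq:positivityn}-\eqref{eq:hyperplane} hold on some open neighborhood of every point of $\overline{X}$, with the $n$-subset of solutions realizing \eqref{eq:positivityn} and the basis of $\V_i$'s possibly varying across neighborhoods. By compactness of $\overline{X}$, this yields a finite subcover $\{\Omega_j\}_{j=1}^J$ on which the lower bounds $c_0, c_1$ in \eqref{eq:positivityn}-\eqref{eq:hyperplane} are uniform, and which may be reordered using path-connectedness of $X$ so that $\Omega_j\cap\Omega_{j+1}\ne\emptyset$ for each $j$.

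For part 1, fix $\bfg\in\G_\gamma^m$ and apply Proposition \ref{prop:stabtau} on each $\Omega_j$ of this finite cover. Starting from a base point $x_0\in\partial X$ where $\tau$ is fixed (the reconstruction via \eqref{eq:gradlogtau} is only defined up to a constant in $\log\tau$, pinned down by boundary knowledge of $\gamma$), propagate the reconstruction inductively: each overlap $\Omega_j\cap\Omega_{j+1}$ furnishes a starting value for the next step whose error $\varepsilon_0$ is controlled by the preceding $L^\infty$-estimate. Since $J$ is finite and the local stability constants are uniformly bounded, the telescoped estimate yields \eqref{eq:stabtauglobal}, with $\|\wtA-\wtA'\|_{W^{1,\infty}}$ reabsorbed into $\|\tilde\gamma-\tilde\gamma'\|_{W^{1,\infty}}$ via smoothness of the matrix square-root on uniformly positive-definite tensors. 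For part 2, fix additionally $\bfh\in\A_\gamma^{m,l}(\bfg)$ and invoke Proposition \ref{prop:stabgamma} on each $\Omega_j$: the local reconstruction of $\tilde\gamma$ is \emph{purely algebraic} and requires no ODE propagation, so taking maxima across the finite cover directly yields the global $L^\infty$-estimate on $\tilde\gamma-\tilde\gamma'$ and on $\nabla(\log\tau-\log\tau')$. The $W^{1,\infty}$-bound on $\log\tau-\log\tau'$ itself then follows by path-integration from $x_0$, giving \eqref{eq:stabgammaglobal}.

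The main obstacle is the quantitative uniformity across the cover: the constants $c_0, c_1$ in \eqref{eq:positivityn}-\eqref{eq:hyperplane}, as well as the $W^{1,\infty}$-norms of the conductivity solutions entering the constants of Propositions \ref{prop:stabtau}-\ref{prop:stabgamma}, must be bounded independently of $j$. This relies on $\C^{1,\alpha}$-elliptic regularity of conductivity solutions and on the openness of the defining positivity conditions, both of which allow pointwise local conditions to be promoted to uniform bounds on a compact neighborhood. A secondary point is verifying that transitioning between different $n$-subsets of solutions on overlapping subdomains in part 1 introduces no spurious errors; this holds because formula \eqref{eq:gradlogtau} is intrinsic once $H$ and $\wtA$ are given, and any two choices of $n$-subset yielding valid bases on the overlap produce the same $\nabla\log\tau$ there.
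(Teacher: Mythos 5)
Your proposal follows essentially the same route as the paper: the admissibility sets $\G_\gamma^m$ and $\A_\gamma^{m,l}(\bfg)$ are used to produce a finite open cover with uniform constants on which the local results (Propositions \ref{prop:stabtau} and \ref{prop:stabgamma}) apply, and the global estimates \eqref{eq:stabtauglobal}--\eqref{eq:stabgammaglobal} follow by patching the local reconstructions across overlaps, propagating the integration constant for $\log\tau$ and gluing the pointwise algebraic formulas for $\tilde\gamma$. This matches the argument the paper sketches (and delegates in detail to the cited thesis), so no further comment is needed.
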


Combining this with the fact that the conditions of linear independence stated above can be formulated in terms of continuous functionals of power densities and their derivatives, we can deduce further useful facts about the sets $\G_\gamma$ and $\A_\gamma$, all of which are established in \cite[Sec. 5.2.1]{Monard2012b}, allowing us to draw the following conclusions (see Sec. \ref{sec:admsetprop}):
\begin{itemize}
    \item The reconstruction algorithms presented above remain stable under $\C^2$-smooth perturbations of the boundary conditions. 
    \item Conductivity tensors that are close enough in $\C^{1,\alpha}$ norm can be reconstructed from power densities emanating from the same boundary conditions. 
    \item The property of being reconstructible carries through push-forwards of conductivity tensors by diffeomorphisms, see in particular Proposition \ref{prop:pushfwd} below.
\end{itemize}

With these properties in mind, global reconstructibility is thus established for conductivity tensors that are $\C^{1,\alpha}$-close to or push-fowarded from the cases below:
\begin{enumerate}
    \item If $\gamma = \tau \Imm_n$ for some scalar function $\tau\in H^{\frac{n}{2}+3+\varepsilon}$ with $\varepsilon>0$, then $\G_\gamma^n\ne\emptyset$ for $n$ even and $\G_\gamma^{n+1}\ne\emptyset$ for $n$ odd. The proof can be found in \cite{Monard2011a} and relies on the construction of complex geometrical optics solutions.  
    \item For $\gamma = \Imm_n$, straighforward computations (see Sec. \ref{sec:prooflocal} below) show that $Id|_{\partial X} \in \G_\gamma^n$ and $\{x_i^2-x_{i+1}^2\}_{i=1}^{n-1}|_{\partial X} \in \A_{\gamma}^{n,n-1} (Id|_{\partial X})$. From this observation, one can cover the case of a constant tensor $\gamma_0$ by pushing forward the above solutions with the diffeomorphism $\Psi(x) = \gamma_0^{-\frac{1}{2}} x$.
\end{enumerate}

\paragraph{Outline:} The rest of the paper is organized as follows. Section \ref{sec:local} justifies the local reconstruction algorithms. For the reconstruction of $\tau$, Section \ref{sec:tau} provides proof of equations \eqref{eq:gradlogtau} and \eqref{eq:gradSi}, Section \ref{sec:tildegamma} covers the reconstruction of all of $\gamma$, while Section \ref{sec:prooflocal} concentrates on proving Theorem \ref{thm:local}. On to the question of global reconstructibility, Section \ref{sec:global} first studies the properties of the admissibility sets $\G_\gamma$ and $\A_\gamma^{m,l}$ before discussing what tensors are globally reconstructible.


\section{Local reconstruction formulas} \label{sec:local}

\subsection{Reconstruction of the scalar factor $\tau$} \label{sec:tau}

\paragraph{Geometric setting and preliminaries:}
We equip $X$ with the Euclidean metric $g(U,V) \equiv U\cdot V = \delta_{ij} U^i V^j$, where the Einstein summation convention is adopted. For $x\in X$, $(\bfe_1,\dots,\bfe_n)$ and $(\bfe^1,\dots,\bfe^n)$ denote the canonical bases of $T_x X$ and $T_x^\star X$, respectively. The flat operator coming from the Euclidean metric maps a vector field $U = U^i \bfe_i$ to the one-form $U^\flat = U^i \bfe^i$. We also denote by $\del$ the {\em Euclidean connection}, i.e. the Levi-Civita connection of the Euclidean metric, which in the canonical basis reads $\del_U V = (U^i \partial_i) V^j \bfe_j$.

Over a set $\Omega\subset X$ where \eqref{eq:positivityn} holds, we have the following decomposition formula, true for any vector field $V$ over $\Omega$
\begin{align}
    V = H^{pq} (V\cdot S_p) S_q, \qquad H^{ij} = [H^{-1}]_{ij}.
    \label{eq:Vdecomp}
\end{align}
For any invertible symmetric matrix $M$, applying \eqref{eq:Vdecomp} to $MV$ and multiplying by $M^{-1}$ yields also the more general formula 
\begin{align}
    V = H^{pq} (V\cdot MS_p) M^{-1} S_q.
    \label{eq:MVdecomp}
\end{align}

\paragraph{Proof of equation \eqref{eq:gradlogtau}:}

The proof essentially relies on the study of the behavior of the {\em dual coframe}\footnote{For $(E_1,\cdots,E_n)$ a frame, $(\omega_1,\cdots,\omega_n)$ is called the dual coframe of $E$ if $\omega_i(E_j) = \delta_{ij}$ for $1\le i,j\le n$.} of the frame $(A^{-1}S_1,\cdots,A^{-1} S_n)$. Let us denote $\sigma_0 = \text{sgn}(\det (S_1,\dots,S_n))$, constant throughout $\Omega$ by virtue of \eqref{eq:positivityn}. Since $S^T S = H$ with $S = [S_1|\dots|S_n]$, we have that 
\begin{align}
    \det(S_1,\dots,S_n) = \sigma_0 \sqrt{\det H} = \sigma_0 \det H^{\frac{1}{2}}, \qquad H = \{H_{pq}\}_{1\le p,q\le n}. 
    \label{eq:detS}
\end{align}
For $1\le j\le n$, let us define the vector field $X_j$ by 
\begin{align}
    X_j^\flat = \sigma_j \star \left[ (A^{-1}S_1)^\flat \wedge\dots\wedge (A^{-1}S_{\hat j})^\flat\wedge\dots\wedge (A^{-1}S_n)^\flat  \right], \quad \sigma_j:= (-1)^{j-1}, 
    \label{eq:Xj}
\end{align}
where the hat over an index indicates its omission. $X_j$ is the unique vector field such that at every $x\in\Omega$ and for every vector $V\in T_x\Omega$, we have 
\begin{align*}
    X_j(x)\cdot V = \det (A^{-1} S_1,\dots, A^{-1} S_{j-1},\overbrace{V}^j, A^{-1} S_{j+1}, \dots, A^{-1} S_n ).
\end{align*} 
In particular, we have that for any $S_n^+(\Rm)$-valued function $M$ and any vector field $V$,
\begin{align*}
    MX_j\cdot V = X_j\cdot MV &= \det (A^{-1} S_1, \dots, MV, \dots, A^{-1}S_n) \\
    &= \det M \det ( (M^{-1} A^{-1})S_1, \dots, V, \dots,   (M^{-1} A^{-1})S_n),
\end{align*}
that is, we have that
\begin{align}
    (M X_j)^\flat = \sigma_j \det M \star\left[ (M^{-1} A^{-1}S_1)^\flat \wedge\dots\wedge (M^{-1} A^{-1}S_{\hat j})^\flat\wedge\dots\wedge (M^{-1}A^{-1}S_n)^\flat \right].
    \label{eq:MXj}
\end{align}
$(X_1,\cdots,X_n)$ is, up to some scalar factor, the dual basis to $(A^{-1} S_1,\cdots,A^{-1}S_n)$ since we have, for $i\ne j$
\begin{align*}
    X_j\cdot A^{-1} S_i = \det (A^{-1}S_1, \dots, \underbrace{A^{-1}S_i}_i  ,\dots, \underbrace{A^{-1}S_i}_j, \dots, A^{-1}S_n) = 0,
\end{align*} 
since the determinant contains twice the vector $A^{-1} S_i$. Moreover, when $i=j$, we have 
\begin{align*}
    X_j\cdot A^{-1}S_j = \det (A^{-1}S_1, \dots, A^{-1} S_n) = \det A^{-1} \det (S_1,\dots,S_n) = \sigma_0 \det \left( A^{-1} H^{\frac{1}{2}}  \right), 
\end{align*}
where we used relation \eqref{eq:detS}. Therefore we can use formula \eqref{eq:MVdecomp} with $M\equiv A^{-1}$ to represent $X_j$ as
\begin{align}
    X_j = \sum_{k,l=1}^n H^{kl} (X_j\cdot A^{-1} S_k) AS_l = \sigma_0 \sum_{l=1}^n H^{jl} \det (A^{-1} H^{\frac{1}{2}}) AS_l.
    \label{eq:Xjrep}
\end{align}

We now show that $X_j$ is divergence-free, that is $\nabla\cdot X_j = 0$ for $1\le j\le n$. Indeed, we write
\begin{align*}
    \nabla\cdot X_j = \star d\star X_j^\flat &= \star d \left[ (A^{-1}S_1)^\flat \wedge\dots\wedge (A^{-1}S_{\hat j})^\flat\wedge\dots\wedge (A^{-1}S_n)^\flat  \right] = 0,
\end{align*}
since an exterior product of closed forms is always closed, thus we have
\begin{align}
    \div X_j = 0, \quad 1\le j\le n. 
    \label{eq:divXj}
\end{align}
Combining equations \eqref{eq:Xjrep} together with \eqref{eq:divXj}, and using the identity $\div (fV) = f\div V + \nabla f\cdot V$ for $f$ a function and $V$ a vector field, we obtain
\begin{align*}
    0 = \div X_j &= \div (H^{jl} \det (A^{-1} H^{\frac{1}{2}}) AS_l) \\
    &= \det (A^{-1} H^{\frac{1}{2}}) \nabla H^{jl} \cdot A S_l + H^{jl} \nabla \det (A^{-1} H^{\frac{1}{2}})\cdot AS_l + \det (A^{-1} H^{\frac{1}{2}}) H^{jl} \div( AS_l).
\end{align*}
The last term is zero since $\div (AS_i) = 0$ and the second term expresses the dotproducts of $\nabla \det (A^{-1} H^{\frac{1}{2}})$ with the frame $A^{-1} S$. Thus we use the representation formula \eqref{eq:MVdecomp} with $M\equiv A$ and divide by $\det (A^{-1} H^{\frac{1}{2}})$ to obtain 
\begin{align*}
    \nabla \log \det (A^{-1} H^{\frac{1}{2}}) = H^{jl} (\nabla \log \det (A^{-1} H^{\frac{1}{2}})\cdot AS_l) A^{-1}S_j = - (\nabla H^{jl} \cdot A S_l) A^{-1} S_j,
\end{align*}
which upon writing $\log\det (A^{-1} H^{\frac{1}{2}}) = - \log\det A + \frac{1}{2} \log\det H$ yields
\begin{align*}
    \nabla\log\det A = \frac{1}{2} \nabla\log\det H + (\nabla H^{jl} \cdot A S_l) A^{-1} S_j.
\end{align*}
We now plug in the rescaling $A = \tau^{\frac{1}{2}} \wtA$ (so that $\det A = \tau^{\frac{n}{2}}$), which implies $A^{-1} = \tau^{-\frac{1}{2}} \wtA^{-1}$, and notice that the terms involving $\tau$ cancel out in the right-hand side of the last equation, thus \eqref{eq:gradlogtau} is proved. 

\paragraph{Proof of Equation \eqref{eq:gradSi}:}
We start by recalling the first equation of \eqref{eq:PDESi} 
\begin{align}
    d (\wtA^{-1}S_i)^\flat = F^\flat \wedge (\wtA^{-1}S_i)^\flat, \quad 1\le i\le n, \quad F:= \nabla\log\tau,
    \label{eq:PDESi2}
\end{align}
where $F$ is now considered a functional of $(S_1,\dots,S_n)$ and of the known power densities (this functional relation was obtained using the divergence equations in \eqref{eq:PDESi}, which are of no further use here). The main tool to derive a first-order differential system for $(S_1,\dots,S_n)$ from \eqref{eq:PDESi2} is Koszul's formula
\begin{align}
    2(\del_U V) \cdot W = \del_U (V\cdot W) + \del_V (U\cdot W) - \del_W (U\cdot V) - U\cdot [V,W] - V\cdot [U,W] + W\cdot[U,V], 
    \label{eq:Koszul}
\end{align}
which expresses covariant derivatives in terms of dotproducts and Lie-Brackets of given vector fields. The dotproducts of $S_1,\dots,S_n$ are known from the power densities, while the Lie Brackets $[\wtA S_i,\wtA S_j]$ are known from \eqref{eq:PDESi2}, as the following calculation shows 
\begin{align}
    \wtA^{-1} S_k\cdot [\wtA S_i,\wtA S_j] &= \wtA S_i\cdot \nabla H_{jk} - \wtA S_j\cdot\nabla H_{ik} - d (\wtA^{-1} S_k)^\flat (\wtA S_i, \wtA S_j) \nonumber\\
    &= \wtA S_i\cdot \nabla H_{jk} - \wtA S_j\cdot\nabla H_{ik} - F^\flat\wedge (\wtA^{-1}S_k)^\flat (\wtA S_i, \wtA S_j) \nonumber\\
    &= \wtA S_i\cdot \nabla H_{jk} - \wtA S_j\cdot\nabla H_{ik} - H_{kj} F\cdot\wtA S_i + H_{ki} F\cdot\wtA S_j, \label{eq:Lie}
\end{align}
where we have used \eqref{eq:PDESi2} and the characterization of the exterior derivative
\begin{align}
    dU^\flat (V,W) = \del_V (U\cdot W) - \del_W (U\cdot V) - U\cdot[V,W]. 
    \label{eq:dU}
\end{align}
Unless $\wtA = \Imm_n$, the frames $S$ and $\wtA S$ do not coincide, therefore one must modify formula \eqref{eq:Koszul} in order to obtain the promised system. 
Following \cite{Monard2012b}, we first choose to represent the total covariant derivative of $S_i$ in the basis of tensors of type $(1,1)$ given by $\{S_i \otimes (\wtA^{-1}S_j)^\flat \}_{i,j=1}^n$, in which the decomposition is explicitely given by 
\begin{align}
    \del S_i = H^{kq}H^{jp} (\del_{\wtA S_q} S_i\cdot S_p)\ S_j \otimes (\wtA^{-1} S_k)^\flat,
    \label{eq:gradSidecomp}
\end{align}
see \cite[Lemma 4.3.4]{Monard2012b}. The subsequent work consists in analysing the term $\del_{\wtA S_q} S_i\cdot S_p$, in particular removing all derivations on the $S_i$'s by moving them onto either known data $H_{ij}$ or the anisotropic structure $\wtA$. 

The first step is to establish the following ``modified'' Koszul formula
\begin{align}
    \begin{split}
	2 (\del_{\wtA S_q} S_i)\cdot S_p &= \del_{\wtA S_q} H_{ip} + \del_{\wtA S_i} H_{qp} - \del_{\wtA S_p} H_{qi} \\
	&\qquad - [S_i,S_p]^{\wtA}\cdot S_q - [S_q, S_p]^{\wtA}\cdot S_i + [S_q,S_i]^{\wtA}\cdot S_p,	
    \end{split}
    \label{eq:modifiedKoszul}
\end{align} 
where we have defined $[U,V]^\wtA = \del_{\wtA U} V - \del_{\wtA V} U$. Equation \eqref{eq:modifiedKoszul} is obtained in \cite[Lemma 4.3.2]{Monard2012b} by using the torsion-freeness and the compatibility of the connection with the metric.

The second step is to establish for any vector fields $U,V$ the following relation
\begin{align}
    [U,V]^\wtA = \wtA^{-1} [\wtA U, \wtA V] - \A_{\wtA} (U,V),
    \label{eq:AA}
\end{align}
where $\A_{\wtA}$ is a vector-valued tensor of type $(2,0)$, whose expression in local coordinates is expressed as 
\begin{align*}
    \A_{\wtA} (U,V) = \frac{1}{2} (U^l V^q - V^l U^q) \wtA^{-1} [\wtA_l,\wtA_q], \quad \wtA_l := \wtA(\cdot,\partial_l),
\end{align*} 
as established in \cite[Lemma 4.3.3]{Monard2012b}. Therefore, the tensor $\A_{\wtA}$ encodes differential information about the anisotropic structure and is identically zero if $\wtA$ is constant. Plugging relation \eqref{eq:AA} into \eqref{eq:modifiedKoszul}, and then using the known Lie brackets expression from \eqref{eq:Lie}, we arrive at
\begin{align*}
    2 (\del_{\wtA S_q} S_i)\cdot S_p &= \del_{\wtA S_q} H_{ip} + \del_{\wtA S_i} H_{qp} - \del_{\wtA S_p} H_{qi} \\
    &\qquad - [\wtA S_i,\wtA S_p] \cdot \wtA^{-1} S_q - [\wtA S_q, \wtA S_p]\cdot \wtA^{-1} S_i + [\wtA S_q,\wtA S_i]\cdot \wtA^{-1} S_p \\
    &\qquad + \A_{\wtA} (S_i,S_p) \cdot S_q + \A_{\wtA} (S_q,S_p)\cdot S_i - \A_{\wtA} (S_q,S_i) \cdot S_p \\
    &= \wtA S_q\cdot\nabla H_{ip} + \wtA S_p\cdot\nabla H_{iq} - \wtA S_i\cdot\nabla H_{pq} + 2 H_{pq} F\cdot\wtA S_i - 2 H_{qi} F\cdot\wtA S_p \\
    &\qquad + \A_{\wtA} (S_i,S_p) \cdot S_q + \A_{\wtA} (S_q,S_p)\cdot S_i - \A_{\wtA} (S_q,S_i) \cdot S_p.
\end{align*}
The last expression no longer differentiates the vector fields $S_i$, which fulfills our goal. Plugging the last expression into \eqref{eq:gradSidecomp} and simplifying expressions of the form \eqref{eq:Vdecomp}, we arrive at the expression
\begin{align}
    \begin{split}
	\del S_i &= \frac{1}{2} \left( S_k\otimes U_{ik}^\flat + \wtA U_{ik} \otimes (\wtA^{-1} S_k)^\flat + (\wtA S_i\cdot\nabla H^{jk})\ S_j\otimes (\wtA^{-1} S_k)^\flat \right)  \\
	&\qquad + (F\cdot \wtA S_i) \wtA^{-1} - \wtA F\otimes (\wtA^{-1} S_i)^\flat \\
	&\qquad + \frac{1}{2} H^{kq} H^{jp} (\A_\wtA (S_i,S_p) \cdot S_q + \A_\wtA (S_q,S_p) \cdot S_i - \A_\wtA (S_q,S_i)\cdot S_p)\ S_j \otimes (\wtA^{-1} S_k)^\flat,
    \end{split}
    \label{eq:gradSifinal}
\end{align}
where we have defined the data vector fields 
\begin{align}
    U_{jk} := (\nabla H_{jp}) H^{pk} = - H_{jp} (\nabla H^{pk}), \quad 1\le j,k\le n.
    \label{eq:Ujk}
\end{align}
The last thing to notice is that, with the expression of $F$ given by \eqref{eq:gradlogtau}, the right-hand side of \eqref{eq:gradSifinal} is a polynomial in the components of $S$ of order at most five. Together with the {\it a priori} uniform estimate
\begin{align*}
    \sum_{i=1}^n |S_i(x)|^2 \le n\max_{x\in X,\ 1\le i\le n} H_{ii}(x),
\end{align*}
we deduce that $\del S_i = \S_i (S,H,dH,\wtA,d\wtA)$, where $\S_i$ is Lipschitz-continuous with respect to $S$ so that the method of characteristic is a uniquely defined and stable. 

\subsection{Reconstruction of the anisotropic structure $\tilde\gamma$, then of $\tau$} \label{sec:tildegamma}

As explained in Section \ref{sec:statement}, we start with $n$ solutions $(u_1,\cdots,u_n)$ whose gradients satisfy the rank maximality condition \eqref{eq:positivityn} over some $\Omega\subset X$. We will call $(\nabla u_1,\cdots,\nabla u_n)$ the {\em support basis}. 

\paragraph{Algebraic redundancies:} 

Using the support basis above and formula \eqref{eq:Vdecomp}, we have for any additional solution $u_{(i)}$ the following relation
\begin{align*}
    \nabla u_{(i)} = H^{pq} (\nabla u_{(i)}\cdot \gamma\nabla u_p) \nabla u_q = H^{pq} H_{(i)p} \nabla u_q.
\end{align*}
As a result, the power density of any two additional solutions $u_{(i)}$ and $u_{(j)}$ is computible via the formula 
\begin{align*}
    H_{(i)(j)} = \nabla u_{(i)}\cdot\gamma\nabla u_{(j)} = H^{pq} H_{(i)p} H^{rl} H_{(j)r} \nabla u_q\cdot\gamma\nabla u_l = H^{pr} H_{(i)p}H_{(j)r},
\end{align*}
i.e. the mutual power density of any two additional solutions is algebraically computible from the mutual power densities of each of these solutions with the support basis. In other words, any additional solution $u_{(i)}$ adds at most $n$ non-redundant dimensions of data, that is, the quantities $H_{(i)p}$ for $1\le p\le n$.

\paragraph{Algebraic equations for $\wtA S$:}

Let us add an additional solution $v \equiv u_{n+1}$ and consider the mutual power densities of these $n+1$ solutions $\{H_{ij}\}_{1\le i,j\le n+1}$. As explained in Appendix \ref{app:lindep}, by linear dependence of $n+1$ vectors in $\Rm^n$, one can find $n+1$ functions 
\begin{align}
    \begin{split}
	\mu_i &= (-1)^{i+n+1} \det \{H_{pq}\}_{1\le p\le n,\ 1\le q\le n+1,\ q\ne i}, \quad 1\le i\le n, \\
	\mu  &= \det \{H_{pq}\}_{1\le p,q\le n},	
    \end{split}    
    \label{eq:mu}
\end{align}
such that 
\begin{align}
    \sum_{i=1}^{n} \mu_i S_i + \mu A\nabla v = 0,
    \label{eq:lindepS}
\end{align}
where $\mu$ never vanishes over $\Omega$ by virtue of \eqref{eq:positivityn}. In particular, the following expression is well-defined over $\Omega$
\begin{align}
    S_v = -\mu^{-1} \sum_{i=1}^n \mu_i S_i.
    \label{eq:Sv}
\end{align}
We now apply the operators $d(A^{-1}\cdot)^\flat$ and $\nabla\cdot(A\cdot)$ to relation \eqref{eq:Sv}, and using the fact that $d(A^{-1} S_i)^\flat = 0$ and $\nabla\cdot(A S_i) = 0$, we arrive at the following relations
\begin{align}
    Z_i^\flat \wedge (\wtA^{-1}S_i) = 0 \qandq Z_i\cdot\wtA S_i = 0, \where \quad Z_i:= \nabla \frac{\mu_i}{\mu}. 
    \label{eq:orthogonalityA}
\end{align}
The first equation describes the vanishing of a two-form, which amounts to $n(n-1)/2$ scalar relations, obtained by applying this two-form to vector fields $\wtA S_p, \wtA S_q$ for $1\le p<q\le n$:
\begin{align*}
    H_{iq} Z_i\cdot\wtA S_p - H_{ip} Z_i\cdot \wtA S_q = 0, \quad 1\le p<q\le n.
\end{align*}
Put in other terms and defining $Z:= [Z_1|\dots|Z_n]$ and $S:= [S_1|\dots|S_n]$, these relations express the facts that {\em the matrix $Z^T\wtA S$ is traceless}, and that {\em the matrix $HZ^T \wtA S$ is symmetric}, which we may express as orthogonality statements of the form 
\begin{align}
    \dprod{\wtA S}{Z} = 0 \qandq \dprod{\wtA S}{Z H \Omega} = 0, \quad \Omega\in A_n(\Rm).
    \label{eq:orthogonality}
\end{align}
In other words, the matrix $\wtA S$ is orthogonal to the following subspace of $M_n(\Rm)$ 
\begin{align}
    \V := \{ Z (\lambda \Imm_n + H\Omega),\quad (\lambda,\Omega) \in \Rm\times A_n(\Rm) \}.
    \label{eq:V}
\end{align}
As established in \cite[Proposition 4.3.8]{Monard2012b}, we have that $\dim V = 1 + r(n - (r+1)/2)$, where $r = \rank Z$, with maximal value $1 + n(n-1)/2$ when $r\in \{n-1,n\}$. 

\paragraph{Reconstruction algorithm:} Assume now that we have $l\ge 1$ additional solutions $(v_1,\dots,v_l)$ generating spaces $\V_1,\dots,\V_l$ of the form \eqref{eq:V}. Let $\{ \bfe_p\otimes \bfe^q - \bfe_q\otimes \bfe^p, \quad 1\le p<q\le n\}$ be a basis for $A_n(\Rm)$, then the space $\sum_{i=1}^l \V_i$ is spanned by the following family
\begin{align}
    \M = \{Z_i,\ Z_i H (\bfe_p\otimes \bfe^q - \bfe_q\otimes \bfe^p)\ |\ 1\le i\le l, \quad 1\le p<q\le n \},\quad \#\M = d_M l, 
    \label{eq:M}
\end{align}
with $d_M$ defined in \eqref{eq:dm}. Assuming that $\dim \sum_{i=1}^l \V_i = n^2-1$ throughout $\Omega$, there is a $n^2-1$-family of $\M$ spanning it, from which we would like to reconstruct $\wtA S$ via a cross product formula, see Appendix \ref{app:crossprod}. Now, for any $n^2-1$-subfamily $(M_1, \dots, M_{n^2-1})$ of $\M$, the cross product $\N(M_1,\dots,M_{n^2-1})$ is
\begin{itemize}
    \item[(i)] either zero if $(M_1, \dots, M_{n^2-1})$ is linearly dependent,
    \item[(ii)] equal to $\pm \left| \frac{\det \N(M_1,\dots,M_{n^2-1})}{\det (\wtA S)} \right|^\frac{1}{n} \wtA S$ otherwise. 
\end{itemize}
In the second case, we compute
\begin{align*}
    \N(M_1,\dots,M_{n^2-1}) H^{-1} \N^2(M_1,\dots,M_{n^2-1}) = \left| \frac{\det \N(M_1,\dots,M_{n^2-1})}{\det (\wtA S)} \right|^\frac{2}{n} \wtA S H^{-1} S^T \wtA^T.
\end{align*}
Using the fact that $S H^{-1} S^T = \Imm_n$, $\wtA = \wtA^T$, and $(\det (\wtA S))^2 = \det H$, we deduce the relation 
\begin{align*}
    \N H^{-1} \N^T = (\det (\N H^{-1} \N^T))^{\frac{1}{n}} \tilde\gamma, \quad\text{with}\quad \N:= \N(M_1,\dots,M_{n^2-1}).
\end{align*}
This expression also covers the case (i), as the factor in front of $\tilde \gamma$ is zero if $(M_1, \dots, M_{n^2-1})$ is linearly dependent. As we do not know {\it a priori} which subfamily of $\M$ is independent, we may sum over all possible cases. With the notation $\I(n^2-1,d_M l)$ introduced in Section \ref{sec:statement}, we sum the last reconstruction formula over all possible $n^2-1$-subfamilies of $\M$ 
\begin{align}
    \sum_{I\in\I(n^2-1, d_M l)} \!\!\!\!\!\! \N(I) H^{-1} \N (I)^T = \F\ \tilde \gamma, \where \quad \F := \sum_{I\in\I(n^2-1,d_M l)} \!\!\!\!\!\! (\det (\N(I) H^{-1} \N (I)^T))^{\frac{1}{n}}.
    \label{eq:F}
\end{align}
$\F$ is a sum of nonnegative terms which vanishes precisely when $\dim \sum_{i=1}^l \V_i < n^2-1$, so a way of formulating the fact that $\dim \sum_{i=1}^l \V_i = n^2-1$ and that $\left( \sum_{i=1}^l \V_i \right)^\perp$ is spanned by a nonsingular matrix is indeed
\begin{align}
    \inf_{x\in \Omega} \F(x) \ge c_1>0.
    \label{eq:infF}
\end{align}
When condition \eqref{eq:infF} is satisfied, $\tilde\gamma$ is uniformly and uniquely reconstructed over $\Omega$ by the following formula
\begin{align}
    \tilde\gamma = \frac{1}{\F} \sum_{I\in\I(n^2-1,d_M l)} \N(I) H^{-1} \N (I)^T, \quad x\in \Omega.
    \label{eq:gammarecons}
\end{align}
On to the reconstruction of $\tau$, we restart from \eqref{eq:gradlogtau} and rewrite it as 
\begin{align}
    \tilde\gamma\nabla\log\tau = \frac{2}{n} |H|^{-\frac{1}{2}} \left( \nabla (|H|^{\frac{1}{2}}H^{jl})\cdot\wtA S_l \right)\wtA S_j,
    \label{eq:gradlogtau2}
\end{align}
where $\tilde\gamma$ is obtained from \eqref{eq:gammarecons}. Again, we will use the cross-product expression to express $\nabla\log\tau$ solely in terms of data. For $I\in\I(n^2-1, d_M l)$, we have,
\begin{align*}
    \N(I) = \pm \left| \frac{\det \N(I)}{\sqrt{\det H}} \right|^{\frac{1}{n}} \wtA S,
\end{align*}
where there is a sign indeterminacy. However, this indeterminacy disappears when considering expressions as in the right-hand side of \eqref{eq:gradlogtau2}:

\begin{align*}
    \left( \nabla (|H|^{\frac{1}{2}}H^{jl})\cdot \N(I)\bfe_l \right)\N(I)\bfe_j &= (\pm)^2 \left| \frac{\det \N(I)}{\sqrt{\det H}} \right|^{\frac{2}{n}} \left( \nabla (|H|^{\frac{1}{2}}H^{jl})\cdot\wtA S_l \right)\wtA S_j \\
    &= ( \det (\N(I)H^{-1}\N(I)^T )^{\frac{1}{n}} \left( \nabla (|H|^{\frac{1}{2}}H^{jl})\cdot\wtA S_l \right)\wtA S_j.
\end{align*}
Summing over $I\in\I(n^2-1,d_M l)$, we arrive at
\begin{align*}
    \sum_{I\in\I} \left( \nabla (|H|^{\frac{1}{2}}H^{jl})\cdot \N(I)\bfe_l \right)\N(I)\bfe_j = \F \left( \nabla (|H|^{\frac{1}{2}}H^{jl})\cdot\wtA S_l \right)\wtA S_j = \F |H|^{\frac{1}{2}} \frac{n}{2} \tilde\gamma\nabla\log\tau,
\end{align*}
which finally may be inverted as 
\begin{align}
    \nabla\log\tau = \frac{2}{n \F |H|^{\frac{1}{2}}} \sum_{I\in\I} \left( \nabla (|H|^{\frac{1}{2}}H^{jl})\cdot \N(I)\bfe_l \right) \tilde\gamma^{-1}\N(I)\bfe_j, \quad x\in \Omega.
    \label{eq:gradlogtaurecons}
\end{align}
This reconstruction formula guarantees a unique and stable reconstruction of $\tau$ with no ambiguity. 

\begin{proof}[Proof of Proposition \ref{prop:stabgamma}]
    The reconstruction of $(\tilde\gamma,\tau)$ is based on formulas \eqref{eq:gammarecons} and \eqref{eq:gradlogtaurecons}. Putting definitions \eqref{eq:mu}, \eqref{eq:V} and \eqref{eq:crossprod} together, we see that the right-hand side of \eqref{eq:gammarecons} is, at every point, a polynomial of power densities and their first-order derivatives. Since the denominator $\F$ in \eqref{eq:gammarecons} is bounded away from zero, we clearly have a continuity statement of the form 
    \begin{align*}
	\|\tilde\gamma - \tilde\gamma'\|_{L^\infty(\Omega)} \le C \|H-H'\|_{W^{1,\infty}(X)},
    \end{align*}
    where the constant $C$ degrades like $c_1^{-1}$ with $c_1$ the constant in \eqref{eq:infF}. On to the reconstruction of $\log\tau$, we can make the same observation as before judging by equation \eqref{eq:gradlogtaurecons} and the fact that, since $\det\tilde\gamma = 1$, the entries of $\tilde\gamma^{-1}$ are polynomials in the entries of $\tilde\gamma$. This leads to a stability statement of the form
    \begin{align*}
	\|\nabla (\log \tau - \log\tau')\|_{L^\infty(\Omega)} \le C \|H-H'\|_{W^{1,\infty}(X)},
    \end{align*}    
    where $C$ here degrades like $c_1^{-2}$. Proposition \ref{prop:stabgamma} is proved.
\end{proof}


\subsection{Proof of Theorem \ref{thm:local}} \label{sec:prooflocal}

The proof of Theorem \ref{thm:local} uses the Runge approximation for elliptic equations, which by virtue of \cite[Equivalence Theorem p.442]{Lax1956} is equivalent to the unique continuation property. The latter property holds for conductivity tensors with regularity no lower than Lipschitz \cite{Garofalo1986}. The Runge approximation, as it is proved in \cite{Lax1956,Bal2012} for instance and adapted to our case here, states that if $\Omega\subset\subset X$, then any function $u\in H^1(\Omega)$ satisfying $\nabla\cdot(\gamma\nabla u)=0$ over $\Omega$ can be approximated arbitrarily well in the sense of $L^2(\Omega)$ by solutions of \eqref{eq:conductivity}, provided that $\gamma$ is Lipschitz-continuous throughout $X$. In fact, we require a little more regularity here ($\gamma\in\C^{1,\alpha}(X)$ with $0<\alpha<1$) for forward elliptic estimates.

\paragraph{Step 1. Local solutions with constant coefficients:} 
Fix $x_0\in X$ and $B_{3r} \equiv B_{3r}(x_0)\subset X$ a ball of radius $3r$ ($r$ tuned hereafter) centered at $x_0$. Denote $\gamma_0:= \gamma(x_0)$ and $A_0 := \gamma_0^{\frac{1}{2}}$. We first construct solutions to the problem with constant coefficients, whose power densities will satisfy conditions \eqref{eq:positivityn} and \eqref{eq:hyperplane}. Such solutions are given by 
\begin{align}
    \begin{split}
	u^0_i(x) &:= x^i-x_0^i, \qquad 1\le i\le n, \qquad\text{and for} \quad 1\le j\le n-1, \\ 	
	u_{n+j}^0(x) &:= \frac{1}{2} (x-x_0)\cdot Q_j (x-x_0), \qquad Q_j:= A_0^{-1} \Hm_j A_0^{-1},
    \end{split}
    \label{eq:constcoeffs}
\end{align}
where we have defined $\Hm_j:= \bfe_j\otimes\bfe^j - \bfe_{j+1}\otimes \bfe^{j+1}$. These solutions satisfy $\nabla\cdot(\gamma_0 \nabla u) = 0$ throughout $\Rm^n$, and we trivially have 
\begin{align}
    \det (\nabla u_1^0,\dots, \nabla u_n^0) = 1, \quad x\in \Rm^n,
    \label{eq:detconstant}
\end{align}
so that condition \eqref{eq:positivityn} is satisfied throughout $B_{3r}$. Moreover, condition \eqref{eq:hyperplane} is also satisfied as direct calculations lead to $Z_{i} = Q_i = A_0^{-1} \Hm_i A_0^{-1}$ for $1\le i\le n-1$, and the matrix $H^0:= \{\nabla u_i^0\cdot\gamma_0\nabla u_j^0\}_{i,j=1}^n$ is nothing but $\gamma_0$. Thus the space of orthogonality is given by
\begin{align*}
    \V = \sum_{i=1}^l \Rm Q_i + Q_i\ \gamma_0\ A_n(\Rm) = A_0^{-1} \left(\sum_{i=1}^l \Rm \Hm_i + \Hm_i A_n(\Rm) \right) A_0^{-1}.
\end{align*}
The last space between brackets can easily be seen to not depend on $x$ and it spans the hyperplane of traceless matrices $\{\Imm_n \}^\perp$, so that $\V = \{\gamma_0\}^\perp$. In particular, condition \eqref{eq:hyperplane} is satisfied for some constant $c_1 >0$ independent of $x$.

\paragraph{Step 2. Local solutions with varying coefficients:}
From solutions $\{u_i^0\}_{i=1}^{2n-1}$, we construct a second family of solutions $\{u_i^r\}_{i=1}^{2n-1}$ via the following equation
\begin{align}
    \nabla\cdot(\gamma \nabla u_i^r) = 0\quad (B_{3r}), \quad u_i^r|_{\partial B_{3r}} = u_i^0, \qquad 1\le i\le 2n-1,
    \label{eq:localsol}
\end{align}
thus the maximum principle implies that 
\begin{align}
    \max_{1\le i\le n} \|u_i^r\|_{L^\infty(B_{3r})} \le 3r \qandq \max_{1\le j\le n-1} \|u_{n+j}^r\|_{L^\infty (B_{3r})} \le C r^2,
    \label{eq:maxprinc}
\end{align}
where the constant only depends on the constant of ellipticity $C(\gamma)$. The difference of both solutions satisfies, for $1\le i\le 2n-1$,
\begin{align}
    - \nabla\cdot(\gamma\nabla(u_i^r - u_i^0)) = \nabla\cdot ((\gamma-\gamma_0)\nabla u_i^0) \quad (B_{3r}), \quad (u_i^r-u_i^0)|_{\partial B_{3r}} = 0,
    \label{eq:localdiff}
\end{align}
where the right-hand side belongs to $\C^{0,\alpha}(\overline{B_{3r}})$ with a uniform bound in $0\le r\le r_0$ for some $r_0$. 
Thus \cite[Theorem 6.6]{Gilbarg2001} implies that 
\begin{align}
    \|u_i^r-u_i^0\|_{\C^{2,\alpha}(B_{3r})} \le C( \|u_i^r-u_i^0 \|_{L^\infty (B_{3r})} + \|F_i\|_{\C^{1,\alpha}(B_{3r})}) \le C' \|\gamma\|_{\C^{1,\alpha}(X)}, 
    \label{eq:holder2}
\end{align}
where the first constant depends on $n$, $C(\gamma)$, $\|\gamma\|_{\C^{1,\alpha}}$ and $B_{3r}$. Interpolating between \eqref{eq:maxprinc} and \eqref{eq:holder2}, we deduce the first important fact
\begin{align}
    \lim_{r\to 0} \quad  \max_{1\le i\le 2n-1} \|u_i^r - u_i^0\|_{\C^2(B_{3r})} = 0.
    \label{eq:localestimate}
\end{align}

\begin{remark}[Dependency of the constants on the domain]
    The constant in \eqref{eq:holder2} depends on $\partial B_{3r}$, thus on $r$, however this dependency works in our favor when shrinking the domain. This can be seen by rescaling the problem $x\to x_0 + rx', x'\in B_3(0)$ to keep the domain fixed, and studying the behavior of the constants w.r.t. the rescalings. 
\end{remark}

\paragraph{Step 3. Runge approximation (control from the boundary $\partial X$):} Assume $r$ has been fixed at this stage. By virtue of the Runge approximation property, for every $\varepsilon >0$ and $1\le i\le 2n-1$, there exists $g_i^\varepsilon\in H^{\frac{1}{2}} (\partial X)$ such that 
\begin{align}
    \|u_i^\varepsilon - u_i^r\|_{L^2(B_{3r})} \le \varepsilon, \where\ u_i^\varepsilon \text{ solves } \eqref{eq:conductivity} \text{ with } u_i^\varepsilon|_{\partial X} = g_i^\varepsilon.
    \label{eq:L2est}
\end{align}
Now applying \cite[Theorem 8.24]{Gilbarg2001} using the fact that $\nabla\cdot(\gamma\nabla (u_i^\varepsilon-u_i^r))=0$ thoughout $B_{3r}$, we deduce that there exists $\beta>0$ such that
\begin{align}
    \|u_i^\varepsilon - u_i^r\|_{\C^\beta(\overline{B_{2r}})} \le C\|u_i^\varepsilon - u_i^r\|_{L^2(B_{3r})} \le C\varepsilon,
    \label{eq:Calphaest}
\end{align}
where the constant only depends on $n$, $C(\gamma)$ and $r = \dist (B_{2r},\partial B_{3r})$, in particular the same estimate holds with $\|u_i^\varepsilon - u_i^r\|_{L^\infty(B_{2r})}$ on the left-hand side. Finally, combining \eqref{eq:Calphaest} with \cite[Corollary 6.3]{Gilbarg2001}, we arrive at 
\begin{align*}
    \|u_i^\varepsilon - u_i^r\|_{\C^2 (\overline{B_r})} \le \frac{C}{r^2} \|u_i^\varepsilon - u_i^r\|_{L^\infty(B_{2r})} \le \frac{C}{r^2} \varepsilon,
\end{align*}
where the constant only depends on $\alpha$, $n$, $C(\gamma)$ and $\|\gamma\|_{\C^{1,\alpha}(X)}$. Since $r$ is fixed at this stage, we deduce that 
\begin{align}
    \lim_{\varepsilon\to 0} \quad \max_{1\le i\le 2n-1} \|u_i^\varepsilon-u_i^l\|_{\C^2(B_r)} = 0.
    \label{eq:epsilonestimate}
\end{align}

\paragraph{Completion of the argument:}
For any $\Omega\subset X$, the following functionals are continuous
\begin{align*}
    C^{1,\alpha}(\Omega)\times C^2(\Omega)\times C^2(\Omega) \ni (\gamma,u,v) &\mapsto H(\gamma,u,v) = \nabla u\cdot\gamma\nabla v \in W^{1,\infty}(\Omega), \\
    [W^{1,\infty} (\Omega)]^{n(n+1)/2} \ni \{H_{ij}\}_{1\le i\le j\le n} &\mapsto \det \{H_{ij}\}_{i,j=1}^n \in W^{1,\infty}(\Omega), \\
    H = \{H_{ij}\}_{1\le i\le j\le 2n-1} &\mapsto \F(H, \nabla H) \in L^\infty(\Omega),
\end{align*}
where in the last case, $\F$ is defined in \eqref{eq:F} with $l=n-1$ and its the domain of definition is 
\begin{align*}
    [W^{1,\infty} (\Omega)]^{(2n-1)n} \quad \text{with the condition}\quad \inf_{\Omega} \det \{H_{ij}\}_{i,j=1}^n >0.
\end{align*}
Step 1 established that $\det \{H_{ij}^0\}_{1\le i\le j\le n}$ and $\F(H^0,\nabla H^0)$ were bounded away from zero over $B_r$. Due to the limits \eqref{eq:localestimate} and \eqref{eq:epsilonestimate}, there exists a small $r>0$, then a small $\varepsilon>0$ such that $\max_{1\le i\le 2n-1} \|u_i^\varepsilon-u_i^0\|_{\C^2(B_r(x_0))}$ is so small that, by the continuity of the functionals mentioned above, $\det \{H_{ij}^\varepsilon\}_{1\le i\le j\le n}$ and $\F(H^\varepsilon,\nabla H^\varepsilon)$ remain uniformly bounded from zero over $B_r$, where we have denoted $H^\varepsilon_{ij} := \nabla u_i^\varepsilon\cdot\gamma\nabla u_j^\varepsilon$ for $1\le i,j\le 2n-1$. Conditions \eqref{eq:positivityn} and \eqref{eq:hyperplane} are thus satisfied over $B_r$ by the family $\{u_i^\varepsilon\}_{i=1}^{2n-1}$ which is controlled by boundary conditions. The proof of Theorem \ref{thm:local} is complete.

\section{Global questions}\label{sec:global}

\subsection{Admissibility sets and their properties}\label{sec:admset}

For compactness of notation, we denote by $\I(M,N)$ ($M\le N$) the set of increasing injections from $[1,M]$ to $[1,N]$ (i.e. $I\in\I(M,N)$ has the form $I = (i_1,\dots,i_M)$ with $1\le i_1<\dots<i_M\le N$).

\paragraph{The sets $\G_\gamma$:} The first admissibility set is that of boundary conditions ensuring that the scalar factor $\tau$ is uniquely and stably reconstructible. This requires the existence of, locally, $n$ solutions with linearly independent gradients. Although one can easily choose $m=n$ in two dimensions thanks to \cite[Theorem 4]{Alessandrini2001}, some counterexamples in higher dimensions \cite{Laugesen1996,Briane2004} show that one may need stricly more than $n$ solutions in general, hence the definition below.

\begin{definition}[Admissibility set $\G_\gamma^m,\ m\ge n$]\label{def:Ggammam}
    Let $\gamma \in \Sigma(X)$ be a given conductivity tensor. For $m\ge n$, an $m$-tuple $\bfg = (g_1,..,g_m)\in (H^{\frac{1}{2}}(\partial X))^m$ belongs to $\G_\gamma^m$ if the following conditions are satisfied (denote $u_i$ the solution of \eqref{eq:conductivity} with $u_i|_{\partial X} = g_i$):
    \begin{itemize}
	\item[(i)] The power densities $H_{ij} = \nabla u_i\cdot\gamma\nabla u_j$ belong to $W^{1,\infty}(X)$ for $1\le i,j\le m$.
	\item[(ii)] There exists a constant $C_{\bfg}>0$ such that
	    \begin{align}
		\inf_{x\in X} \D_\gamma^m [\bfg] (x) \ge C_{\bfg}, \where \quad \D^m_\gamma[\bfg](x) := \sum_{I\in\I(n,m)} \det \{ H_{pq} \}_{p,q\in I}.
		\label{eq:positivitym}
	    \end{align}
    \end{itemize}
\end{definition}

Condition {\em (i)} above allows to construct a finite open cover of $X$ in a generic manner, where to each open set $\Omega_k$ can be associated a single basis of $n$ solutions, see \cite[Prop. 5.1.2]{Monard2012b}. This basis can then be used to reconstruct $\tau$ throughout each $\Omega_k$. Doing this for each $\Omega_k$ and patching reconstructions appropriately allows to reconstruct $\tau$ in a globally unique and stable fashion, as is summarized in \cite[Theorem 5.1.4]{Monard2012b}.

\paragraph{The sets $\A_\gamma$:}

On to the global reconstruction of the anisotropy $\tilde\gamma$, we now define a second class of sets of boundary conditions, such that the solutions generated satisfy condition \eqref{eq:hyperplane} throughout $X$.

Let $\gamma$ such that $\G_\gamma^m\ne\emptyset$ for some $m\ge n$ and pick $\bfg\in\G_\gamma^m$ with constant $C_{\bfg}$ as in \eqref{eq:positivitym}. By virtue of \cite[Prop. 5.1.2]{Monard2012b}, there exists an open cover made of balls $\O = \{\Omega_k\}_{k=1}^K$ of $X$, a constant $C'_\bfg>0$ and an indexing function $I_{(k)} = (i_{(k)1},\dots,i_{(k)n})\in\I(n,m)$ for $1\le k\le K$ 
\begin{align}
    \min_{1\le k\le K} \inf_{x\in\Omega_k} \det H_{(k)} \ge C'_{\bfg}, \qquad H_{(k)} := \{H_{pq},\ p,q\in I_{(k)} \},
    \label{eq:opencover}
\end{align}
i.e. one may use $\{\nabla u_i\}_{i\in I_{(k)}}$ as a support basis over $\Omega_k$. Given an additional solution $v_{\alpha}$, we now construct over each $\Omega_k$ a basis for the space $\V$ based on the local support basis:
\begin{align}
    \begin{split}
	\V_\alpha|_{\Omega_k} &= \Rm Z_{\alpha(k)} + Z_{\alpha(k)} H_{(k)} A_n(\Rm), \quad \text{where for } 1\le j\le n, \\
	Z_{\alpha(k)}\bfe_j &:= (-1)^{j+n+1} \nabla \left( \det \{H_{pq},\ p\in I_{(k)}, q\in\text{subs }(I_{(k)}, j,\alpha) \}\ /\ \det H_{(k)} \right)
    \end{split}    
    \label{eq:Vomegak}
\end{align}
and where ``$\text{subs }(I_{(k)}, i_{(k)j},\alpha)$'' is obtained from $I_{(k)}$ by replacing $i_{(k)j}$ by $\alpha$. From a collection of $l$ additional solutions, similarly to \eqref{eq:M}, we build over each $\Omega_k$ the family of matrices
\begin{align}
    \M|_{\Omega_k} = \{ Z_{i(k)},\ Z_{i(k)} H_{(k)} (\bfe_p\otimes \bfe^q - \bfe_q\otimes \bfe^p)\ |\ \quad 1\le i\le l,\quad 1\le p<q\le n \},
    \label{eq:Mp}
\end{align}
of cardinality $d_M l$ with $d_M$ defined in \eqref{eq:dm}, so that we may rewrite it generically as 
\begin{align*}
    \M|_{\Omega_k} = \{ M_{(k)i}\ |\ 1\le i\le d_M l \}.    
\end{align*}

\begin{definition}[Admissibility set $\A_\gamma^{m,l}(\bfg)$ for $\bfg\in\G_\gamma^m$]\label{def:Agammamk} 
    For $m\ge n$, let us assume that $\bfg = (g_1,\cdots,g_m) \in \G_\gamma^m$, and let $(\O = \{\Omega_k \}_{k=1}^K, I, C_\bfg)$ an open cover, an indexing function and a constant associated to it. For $l\ge 1$, we say that $l$ additional boundary conditions $\bfh = (h_1,\cdots,h_l) \in \big(H^{\frac{1}{2}}(\partial X)\big)^l$ belong to the set of admissibility $\A_\gamma^{m,l}(\bfg)$ if there exists a constant $C_{\bfg,\bfh}>0$ such that the following condition holds
    \begin{align}
	\min_{1\le k\le K} \inf_{x\in \Omega_k} & \F_\gamma^{m,l} [\bfg,\bfh]|_{\Omega_k} (x) \ge C_{\bfg,\bfh}, \where \label{eq:hyperplane_det} \\
	\F_\gamma^{m,l}[\bfg,\bfh]|_{\Omega_k} &:= \sum_{J\in\I(n^2-1,d_M l)} \det (\N_{(k)} (J) H_{(k)}^{-1} \N_{(k)} (J)^T)^{\frac{1}{n}}, \label{eq:Fgammaml} \\
	\N_{(k)} (J) &:= \N (M_{(k)j_1}, \dots, M_{(k)j_{n^2-1}}). \nonumber
    \end{align}
\end{definition}

With definitions \ref{def:Ggammam} and \ref{def:Agammamk} in mind, in the sense of the present derivations, we may say that a tensor $\gamma$ is {\em globally reconstructible from power densities} if $\G_\gamma^m\ne\emptyset$ for some $m\ge n$ and for $\bfg\in\G_\gamma^m$, $\A_\gamma^{m,l}(\bfg)\ne\emptyset$ for $l\ge 1$ large enough.

\subsection{Properties of the admissibility sets}\label{sec:admsetprop}

\paragraph{Openness properties of $\G_\gamma$ and $\A_\gamma$:}

\begin{itemize}
    \item For $\C^{1,\alpha}$-smooth $\gamma$ an $\C^3$-smooth $\partial X$, the sets $\G_\gamma$ and $\A_\gamma$ are open for the topology of $\C^{2,\alpha}(\partial X)$ boundary conditions (\cite[Lemma 5.2.2]{Monard2012b}). 
    \item For $\C^{1,\alpha}$-smooth $\gamma$ (\cite[Lemma 5.2.3]{Monard2012b}).
\end{itemize}

\paragraph{Behavior of $\G_\gamma$ and $\A_\gamma$ with respect to push-forwards by diffeomorphisms:}

In the topic of inverse conductivity, diffeomorphisms are used in the anisotropic Calder\'on's problem to exhibit an obstruction to uniqueness. Here, these diffeomorphisms work in our favor in the sense that the property of being locally or globally reconstructible from power densities carries through push-forwards by diffeormorphisms. 

Let $\Psi:X\to \Psi(X)$ be a $W^{1,2}$-diffeomorphism where $X$ has smooth boundary. Then for $\gamma\in\Sigma(X)$, we define over $\Psi(X)$ the push-forward of $\gamma$ by $\Psi$, denoted $\Psi_\star\gamma$, the tensor
\begin{align}
    \Psi_\star \gamma (y) := (|J_\Psi|^{-1} D\Psi\ \gamma\ D\Psi^T)\circ \Psi^{-1}(y), \quad y\in\Psi(X), \quad J_\Psi:= \det D\Psi.
    \label{eq:pushfwd}
\end{align}
As explained in \cite{Astala2005}, $\Psi_\star \gamma \in \Sigma(\Psi(X))$, and $\Psi$ pushes foward a solution $u$ of \eqref{eq:conductivity} to a function $v = u\circ\Psi^{-1}$ satisfying the elliptic equation
\begin{align*}
    -\nabla_y \cdot (\Psi_\star \gamma \nabla_y v) = 0 \quad (\Psi(X)), \quad v|_{\partial (\Psi(X))} = g\circ\Psi^{-1},
\end{align*}
moreover $\Psi$ and $\Psi|_{\partial X}$ induce isomorphisms of $H^1(X)$ and $H^{\frac{1}{2}}(\partial X)$ onto $H^1(\Psi(X))$ and $H^{\frac{1}{2}}(\partial (\Psi(X)))$, respectively. For our proofs based on pointwise estimates, we will add the further requirement that $\Psi$ satisfies a condition of the form 
\begin{align}
    C_\Psi^{-1} \le |J\Psi(x)| \le C_\Psi, \quad x\in X, \quad \text{for some constant } C_\Psi \ge 1.
    \label{eq:psiOK}
\end{align}
We define the relation $(\gamma,X)\sim (\gamma',X')$ iff there exists $\Psi:X\to X'$ a diffeomorphism onto $X'$ satisfying \eqref{eq:psiOK}, such that $\gamma' = \Psi_\star \gamma$. It is clear that $\sim$ is an equivalence relation. 

With these definitions in mind, our main observation is the following
\begin{proposition}[Prop. 5.2.4-5.2.5 in \cite{Monard2012b}]\label{prop:pushfwd}
    For $\gamma\in\Sigma(X)$ and $\Psi:X\to\Psi(X)$ a $W^{1,2}$-diffeomorphism satisfying \eqref{eq:psiOK}, we have for any $m\ge n$ 
    \begin{align}
	\G_{\Psi_\star\gamma}^m = \{\bfg\circ\Psi^{-1}\ :\ \G_\gamma^m \}.
	\label{eq:pushfwdGgamma}
    \end{align}
    Moreover, if $\bfg\in\G_\gamma^m$ for some $m\ge n$, then we have 
    \begin{align}
	\A_{\Psi_\star\gamma}^{m,l} (\bfg\circ\Psi^{-1}) = \{ \bfh\circ\Psi^{-1}\ :\ \bfh\in \A_{\gamma}^{m,l}(\bfg) \}.
	\label{eq:pushfwdAgamma}
    \end{align}
\end{proposition}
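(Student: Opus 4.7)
The plan is to verify that every scalar and matrix-valued quantity entering Definitions \ref{def:Ggammam} and \ref{def:Agammamk} transforms covariantly under $\Psi_\star$, with scalar prefactors controlled by bounded powers of $|J_\Psi|$. Combined with \eqref{eq:psiOK}, the positivity conditions \eqref{eq:positivitym} and \eqref{eq:hyperplane_det} will then transfer in both directions. By symmetry applied to $\Psi^{-1}:\Psi(X)\to X$ (which also satisfies \eqref{eq:psiOK}), it suffices to establish the inclusions "$\supseteq$" in \eqref{eq:pushfwdGgamma} and \eqref{eq:pushfwdAgamma}.

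First I would establish the key pointwise identity for power densities. If $u$ solves \eqref{eq:conductivity} with data $g$, then $v:=u\circ\Psi^{-1}$ solves $\nabla_y\cdot(\Psi_\star\gamma\,\nabla_y v)=0$ with data $g\circ\Psi^{-1}$; the chain rule $\nabla_y v=(D\Psi^{-T}\nabla_x u)\circ\Psi^{-1}$ plugged into \eqref{eq:pushfwd} yields, after cancellation of $D\Psi^T D\Psi^{-T}$,
\[
\tilde H_{ij}(y) := \nabla_y v_i\cdot\Psi_\star\gamma\,\nabla_y v_j(y) = \bigl(|J_\Psi|^{-1} H_{ij}\bigr)\circ\Psi^{-1}(y).
\]
Multilinearity then gives $\det\{\tilde H_{pq}\}_{p,q\in I}=(|J_\Psi|^{-n}\det\{H_{pq}\}_{p,q\in I})\circ\Psi^{-1}$ for every $I\in\I(n,m)$, whence $\D_{\Psi_\star\gamma}^m[\bfg\circ\Psi^{-1}]=(|J_\Psi|^{-n}\D_\gamma^m[\bfg])\circ\Psi^{-1}$. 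Since $|J_\Psi|$ is bounded above and below by \eqref{eq:psiOK}, positivity is preserved in both directions, and \eqref{eq:pushfwdGgamma} follows (regularity condition (i) in Def. \ref{def:Ggammam} transfers under sufficient regularity of $\Psi$, assumed implicit).

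For $\A_\gamma^{m,l}$, the starting point is that an adapted cover $\{\Omega_k\}$ with indexing $I$ for $\bfg$ yields an adapted cover $\{\Psi(\Omega_k)\}$ with the same $I$ for $\bfg\circ\Psi^{-1}$, by Step~1. The crucial observation is that each ratio defining the columns of $Z_{\alpha(k)}$ in \eqref{eq:Vomegak} is a quotient of two determinants that both scale by $|J_\Psi|^{-n}$, and is therefore \emph{invariant}: $\tilde R_{\alpha(k)j}=R_{\alpha(k)j}\circ\Psi^{-1}$. The chain rule then gives $\tilde Z_{\alpha(k)}=(D\Psi^{-T}\cdot Z_{\alpha(k)})\circ\Psi^{-1}$, so each matrix $M_{(k)i}\in\M|_{\Omega_k}$ transforms as $\tilde M_{(k)i}=c_i\,(D\Psi^{-T}\cdot M_{(k)i})\circ\Psi^{-1}$ with $c_i$ equal to $1$ (for the matrices $Z_{i(k)}$) or to $|J_\Psi|^{-1}\circ\Psi^{-1}$ (for those involving $H_{(k)}$), uniformly bounded by \eqref{eq:psiOK}.

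Finally, I would push this covariance through the cross-product appearing in \eqref{eq:Fgammaml}. Identifying $M_n(\Rm)$ with $\Rm^{n^2}$, left-multiplication by $D\Psi^{-T}$ becomes the linear map $\Imm_n\otimes D\Psi^{-T}$ of determinant $|J_\Psi|^{-n}$, and the cross-product transformation rule $\N(Lv_1,\ldots,Lv_{N-1})=\det L\cdot L^{-T}\N(v_1,\ldots,v_{N-1})$ combined with multilinearity in the scalars $c_i$ gives $\tilde\N_{(k)}(J)=\lambda_\Psi(J)\,(D\Psi^T\cdot\N_{(k)}(J))\circ\Psi^{-1}$ with $\lambda_\Psi(J)$ a bounded product of powers of $|J_\Psi|$. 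Using $\tilde H_{(k)}^{-1}=(|J_\Psi|H_{(k)}^{-1})\circ\Psi^{-1}$ and the rule $\det(\mu M)=\mu^n\det M$, this leads to
\[
\det\bigl(\tilde\N_{(k)}(J)\,\tilde H_{(k)}^{-1}\,\tilde\N_{(k)}(J)^T\bigr)^{1/n}=\omega_\Psi(J)\cdot\det\bigl(\N_{(k)}(J)\,H_{(k)}^{-1}\,\N_{(k)}(J)^T\bigr)^{1/n}\circ\Psi^{-1},
\]
with $\omega_\Psi(J)>0$ bounded above and below by \eqref{eq:psiOK}. Summing over $J\in\I(n^2-1,d_M l)$ gives a two-sided comparison of $\F_{\Psi_\star\gamma}^{m,l}[\bfg\circ\Psi^{-1},\bfh\circ\Psi^{-1}]$ with $\F_\gamma^{m,l}[\bfg,\bfh]\circ\Psi^{-1}$ on each $\Psi(\Omega_k)$, from which \eqref{eq:pushfwdAgamma} follows. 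The main technical obstacle is the accurate bookkeeping of scalar prefactors, powers of $|J_\Psi|$ and of the $c_i$, through the cross-product/determinant calculus of this last step; the computations are elementary, but the combinatorics must be carried out carefully for the uniform two-sided bound to persist.
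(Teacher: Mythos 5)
Your proposal is correct and follows essentially the same route as the paper's proof: the pointwise law $H'_{ij}\circ\Psi = |J_\Psi|^{-1}H_{ij}$, the resulting degree-$n$ scaling of the determinants in $\D_\gamma^m$, the invariance of the determinant ratios defining $Z_{\alpha(k)}$ followed by the chain rule, and the cross-product transformation rule \eqref{eq:Ntransf} combined with the two-sided bound \eqref{eq:psiOK}. (One small slip: in your direction the rule conjugates $\N$ by $D\Psi$, not $D\Psi^T$, but this is immaterial since only $(\det D\Psi)^2$ enters $\det(\N H^{-1}\N^T)$; and your per-subfamily factors $\omega_\Psi(J)$ are, if anything, a more careful bookkeeping than the paper's single exponent $|J_\Psi|^{2n-1-\frac{2}{n}}$.)
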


\begin{remark}
    In other words, when a tensor $\gamma$ is reconstructible from power densities, then so is any tensor of the form $\Psi_\star \gamma$ with $\Psi$ defined as above. Moreover, if $(g_1,\dots,g_m,h_1,\dots,h_l)$ are boundary conditions on $\partial X$ whose corresponding solutions allow to reconstruct $\gamma$ via the above explicit algorithms, then one may pick precisely $(g_1,\dots,g_m,h_1,\dots,h_l)\circ\Psi^{-1}$ as boundary conditions on $\partial (\Psi(X))$ to reconstruct $\Psi_\star \gamma$. In particular, if $\Psi$ fixes the boundary $\partial X$, then one may pick the same boundary conditions as $\gamma$ to reconstruct $\Psi_\star\gamma$.
\end{remark}

\begin{proof}[Proof of proposition \ref{prop:pushfwd}:] Let $\bfg\in \G_\gamma^m$ for $m\ge n$. The corresponding solutions $\{u_i\}_{i=1}^m$ are being pushforwarded to functions $v_i = u_i\circ\Psi^{-1}$ over $\Psi(X)$ whose power densities are denoted $H'_{ij} = \nabla v_i\cdot[\Psi_\star\gamma]\nabla v_j$. For this proof, primed quantities will always indicate quantities referring to the push-forwarded problem. Using the chain rule and the definition of $\Psi_\star \gamma$, we have the transformation law
    \begin{align}
	H_{ij} (x) = |J_\Psi(x)| H'_{ij} (\Psi(x)), \quad x\in X.
	\label{eq:pushfwdpowdens}
    \end{align}
    Since the functions $\D_\gamma^m[\bfg]$ defined in \eqref{eq:positivitym} are homogeneous polynomials of power densities of degree $n$, we have the following relation
    \begin{align}
	\D_\gamma^m[\bfg](x) = |J_\Psi(x)|^n \D_{\Psi_\star \gamma}^m [\bfg\circ\Psi^{-1}](\Psi(x)), \quad x\in X.
	\label{eq:pushfwdDgamma}
    \end{align}
    By virtue of condition \eqref{eq:psiOK}, the left-hand side of \eqref{eq:pushfwdDgamma} is uniformly bounded away from zero if and only if the right-hand side is as well, which concludes the proof of \eqref{eq:pushfwdGgamma}. 

    On to the proof of \eqref{eq:pushfwdAgamma}, we first look at how things are being push-forwarded locally. As in the preliminaries before definition \ref{def:Agammamk}, an open over $\O = \{\Omega_k\}_{k=1}^K$ of $X$ yields an open cover $\{\Psi(\Omega_k)\}_{k=1}^K$ of $\Psi(X)$ with the same indexing function $I$. This is because of the transformation law
    \begin{align*}
	\det (\nabla_x u_{i_1},\dots,\nabla_x u_{i_n})(x) = J_\Psi(x) \det (\nabla_y v_{i_1}, \dots, \nabla_y v_{i_n}) (\Psi(x)), \quad x\in X,
    \end{align*}
    which ensures that $\{\nabla_x u_i\}_{i\in I(k)}$ is a basis over $\Omega_k$ iff $\{\nabla_y v_i\}_{i\in I(k)}$ is a basis over $\Psi(\Omega_k)$ with $v_i = u_i\circ\Psi^{-1}$. Using \eqref{eq:pushfwdpowdens} and the chain rule, the matrices $Z_{\alpha(k)}$ defined in \eqref{eq:Vomegak} admit the transformation law
    \begin{align}
	Z_{\alpha(k)} = D\Psi^T\ Z'_{\alpha(k)}\circ\Psi \quad (\Omega_k).
	\label{eq:pushfwdZ}
    \end{align}
    In the definition \eqref{eq:Vomegak} of the space $\V_\alpha|_{\Omega_k}$, the scalar function $|J_\Psi|$ appearing from the fact that $H_{(k)}(x) = |J_\Psi(x)| H'_{(k)}(\Psi(x))$ may be absorbed by the space $A_n(\Rm)$, so that we may write
    \begin{align*}
	\V_\alpha|_{\Omega_k} = D\Psi^T\ \V'_{\alpha}|_{\Omega_k}\circ\Psi \quad (\Omega_k). 
    \end{align*}
    Thus the family $\M|_{\Omega_k}$ \eqref{eq:Mp}, from the elements of which one construct cross-products, transforms as
    \begin{align*}
	\M|_{\Omega_k} = D\Psi^T\ \M'|_{\Psi(\Omega_k)}\circ\Psi \quad (\Omega_k).
    \end{align*}
    Using formula \eqref{eq:Ntransf}, we deduce that for $J\in \I(n^2-1,\#\M)$ and throughout $\Omega_k$ 
    \begin{align*}
	\N_{(k)}(J) &= \N(M_{(k)j_1},\dots,M_{(k)j_{n^2-1}}) \\
	&= \N (D\Psi^T\ M'_{(k)j_1}\circ\Psi, \dots, D\Psi^T\ M'_{(k)j_{n^2-1}}\circ\Psi) \\
	&= (J_\Psi)^n D\Psi^{-1} \N (M'_{(k)j_1}, \dots, M'_{(k)j_{n^2-1}}) \circ\Psi.
    \end{align*}
    In particular, the function $\F_\gamma^{m,l}[\bfg,\bfh]|_{\Omega_k}$ defined in \eqref{eq:Fgammaml} transforms according to the rule
    \begin{align}
	\F_\gamma^{m,l}[\bfg,\bfh] |_{\Omega_k} = |J_\Psi|^{2n-1-\frac{2}{n}} \F_{\Psi_\star \gamma}^{m,l} [\bfg\circ\Psi^{-1}, \bfh\circ\Psi^{-1}]\circ\Psi.
	\label{eq:pushfwdFgammaml}
    \end{align}
    Again, by virtue of \eqref{eq:psiOK}, the left-hand side of \eqref{eq:pushfwdFgammaml} is bounded away from zero iff the right-hand side is bounded away from zero. Taking the minimum over $1\le k\le K$ does not change this property, thus \eqref{eq:pushfwdAgamma} is proved. 
\end{proof}

\appendix

\section{Linear algebra}

\subsection{Relations of linear dependence}\label{app:lindep}

\begin{lemma} \label{lem:lindep}  
    Let $(V_1,\ldots, V_{n+1})$ be $n+1$ vectors in $\Rm^n$, and denote $H_{ij} = V_i\cdot V_j$ for $1\le i,j\le n+1$. Then the following linear dependence relation $\sum_{i=1}^{n+1} \mu_i V_i = 0$ holds with coefficients
    \begin{align}
	\begin{split}
	    \mu_i &= -\det (V_1,\dots,V_n)\cdot\det (V_1,\dots, \underbrace{V_{n+1}}_i, \dots, V_n), \\
	    &= (-1)^{i+n+1} \det \{H_{pq}\ |\ 1\le p\le n, 1\le q\le n+1, q\ne i\}, \quad 1\le i\le n, \\
	    \text{and} \quad\mu_{n+1} &= \det (V_1,\dots,V_n)^2 = \det \{H_{ij}\}_{1\le i,j\le n}.	    
	\end{split}	
	\label{eq:lindep}
    \end{align}
\end{lemma}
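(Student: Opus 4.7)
The plan is to obtain the linear dependence relation in two stages: first produce a nontrivial combination of $V_1,\ldots,V_{n+1}$ whose coefficients are expressed as products of $n\times n$ determinants of vectors, then convert each such product into a single Gram-type determinant in the dot products $H_{pq}$.

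For the first stage, I would proceed generically by assuming $V_1,\ldots,V_n$ are linearly independent, so $\det(V_1,\ldots,V_n)\neq 0$, and decomposing $V_{n+1}=\sum_{i=1}^n c_i V_i$. Cramer's rule gives
\[
c_i=\frac{\det(V_1,\ldots,\underbrace{V_{n+1}}_i,\ldots,V_n)}{\det(V_1,\ldots,V_n)}, \qquad 1\le i\le n.
\]
Clearing the denominator and multiplying the resulting relation by $-\det(V_1,\ldots,V_n)$ (purely to symmetrize the scaling of the coefficients) yields exactly the first expression claimed for $\mu_i$, $1\le i\le n$, and for $\mu_{n+1}=\det(V_1,\ldots,V_n)^2$. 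In the degenerate situation where $V_1,\ldots,V_n$ themselves are linearly dependent, $\det(V_1,\ldots,V_n)=0$ and every candidate $\mu_i$ carries this determinant as a factor, so the relation $\sum\mu_i V_i=0$ reduces to $0=0$ and remains valid, if vacuously.

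For the second stage, I would use the multiplicative identity $\det M\,\det N=\det(M^T N)$ applied to $M=[V_1|\cdots|V_n]$ and $N_i$, the matrix obtained from $M$ by replacing its $i$-th column with $V_{n+1}$. The $(p,q)$-entry of $M^T N_i$ equals $H_{pq}$ for $q\ne i$ and $H_{p,n+1}$ for $q=i$. Moving this anomalous column from slot $i$ to the rightmost slot costs $n-i$ transpositions, producing a sign $(-1)^{n-i}=(-1)^{n+i}$; combined with the overall $-1$ from the rescaling in the first stage this gives precisely the factor $(-1)^{i+n+1}$ attached to $\det\{H_{pq}\mid 1\le p\le n,\ q\in\{1,\ldots,n+1\}\setminus\{i\}\}$. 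The identity $\mu_{n+1}=\det(V_1,\ldots,V_n)^2=\det\{H_{ij}\}_{i,j=1}^n$ is the classical Gram determinant formula, a special case of the same trick with $M=N$.

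The main obstacle is purely bookkeeping: carefully tracking the $(-1)^{n-i}$ sign from the column permutation and coordinating it with the $-\det(V_1,\ldots,V_n)$ rescaling so that the final sign matches the stated $(-1)^{i+n+1}$. No genuine analytic or algebraic difficulty arises.
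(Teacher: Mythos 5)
Your proof is correct. Both stages hold up: Cramer's rule gives the vector-form coefficients in the generic case, the degenerate case $\det(V_1,\dots,V_n)=0$ is indeed harmless because every stated coefficient carries that determinant as a factor (the lemma as stated does not claim nontriviality of the relation), and your sign bookkeeping is right, since $-(-1)^{n-i}=(-1)^{i+n+1}$, with $\det M\,\det N_i=\det(M^T N_i)$ converting the products of determinants into the Gram-type determinants in the $H_{pq}$. However, your route differs from the paper's. The paper defines a single $\Rm^n$-valued formal $(n+1)\times(n+1)$ determinant whose first $n$ rows are the dot products $V_p\cdot V_q$ and whose last row is the vectors $V_1,\dots,V_{n+1}$ themselves; expanding along the last row produces $V=\sum_{i=1}^{n+1}\mu_i V_i$ with the Gram-form coefficients appearing directly as cofactors, and the vanishing of $V$ is then obtained uniformly by noting that $V\cdot V_i=0$ for $i\le n$ (a determinant with two equal rows) and $V\cdot V_{n+1}=0$ (the Gram determinant of $n+1$ vectors in $\Rm^n$), whence $V\cdot V=0$ and $V=0$. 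The paper's argument thus needs no genericity assumption or case split and derives both expressions for the coefficients within one expansion, while your two-stage argument is more elementary and makes transparent where the coefficients come from (Cramer's rule) at the modest cost of a separate degenerate-case remark and an explicit column-permutation sign computation.
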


\begin{proof}
    Define the $\mu_i$'s as in the statement of the function and let us show that $\sum_{i=1}^{n+1}\mu_i V_i = 0$. Consider the vector field defined by the following formal $(n+1) \times (n+1)$ determinant
    \begin{align*}
	V = \det \left(
	\begin{array}{cccc}
	    V_1\cdot V_1 & \cdots & V_1\cdot V_n & V_1\cdot V_{n+1} \\
	    \vdots & \ddots & \vdots & \vdots \\
	    V_n\cdot V_1 & \cdots & V_n\cdot V_n & V_n\cdot V_{n+1} \\
	    V_1 & \cdots & V_n & V_{n+1}
	\end{array}
	\right),
    \end{align*}
    i.e. computed by expanding along the last row. Then we have 
    \begin{align*}
	V &= \sum_{i=1}^{n+1} (-1)^{i+n+1} \det \left( \{H_{pq}\}_{1\le p\le n, 1\le q\le n+1, q\ne i} \right)\ V_i \\
	&= \sum_{i=1}^{n+1} (-1)^{i+n+1} \det (V_1,\dots,V_n)\ \det (V_1,\dots,V_{\hat i}, \dots, V_{n+1})\ V_i \\
	&= - \sum_{i=1}^{n} \det (V_1,\dots,V_n)\cdot \det (V_1,\dots, \underbrace{V_{n+1}}_i, \dots, V_n) V_i + \det(V_1,\dots,V_n)^2 V_n = \sum_{i=1}^{n+1} \mu_i V_i,
    \end{align*}
    where moving $V_{n+1}$ back to the $i$-th position in the $i$-th requires $n-i$ sign flips. We now show that $V=0$. For $1\le i\le n$, the dotproduct $V\cdot V_i$ becomes a determinant of a matrix whose rows of indices $i$ and $n+1$ are equal, therefore $V\cdot S_i=0$. Moreover, $V\cdot S_{n+1}$ is nothing but the determinant of the Gramian matrix of $(V_1,\dots,V_{n+1})$, which is zero since $n+1$ vectors are necessarily linearly dependent. Concluding, we have 
    \begin{align*}
	V\cdot V = \sum_{i=1}^{n+1} \mu_i V\cdot V_i = 0,
    \end{align*}
    thus $V=0$, hence the lemma. 
\end{proof}

\subsection{Generalization of the cross-product} \label{app:crossprod}

Let us consider a $N$-dimensional inner product space $(\V,\dprod{}{})$ with a basis $(\bfe_1,\cdots,\bfe_N)$. Given a linearly independent family of $N-1$ vectors $(V_1,\cdots,V_{N-1})$ in $\V$, a (non-normalized) {\em normal} to the hyperplane spanned by $(V_1,\cdots,V_{N-1})$ is given by computing the formal $\V$-valued determinant
\begin{align}
    \N(V_1,\cdots,V_{N-1}) := \frac{1}{\det (\bfe_1,\cdots,\bfe_N)} \left|
    \begin{array}{ccc}
	\dprod{V_1}{\bfe_1} & \cdots & \dprod{V_1}{\bfe_N} \\
	\vdots & \ddots & \vdots \\
	\dprod{V_{N-1}}{\bfe_1} & \cdots & \dprod{V_{N-1}}{\bfe_N} \\
	\bfe_1 & \cdots & \bfe_N
    \end{array}
    \right|,
    \label{eq:crossprod}
\end{align}
to be expanded along the last row. The function $\N$ can be easily seen to be $N-1$-linear and alternating, and its definition does not depend on the choice of basis $(\bfe_1,\cdots,\bfe_N)$. Moreover, $\N$ satisfies the orthogonality property 
\begin{align*}
    \dprod{\N(V_1,\cdots,V_{N-1})}{V_j} = 0, \quad 1\le j\le N-1,
\end{align*}
as such dotproducts take the form of determinants with identical $j$-th and $N$-th rows. The first important property is that the squared norm of $\N$ represents the hypervolume spanned by $V_1,\dots,V_{N-1}$: 
\begin{align}
    \dprod{\N}{\N} = \det \{\dprod{V_i}{V_j}\}_{1\le i,j\le N-1},
    \label{eq:crossprodnorm}
\end{align}    

We now derive transformation rules when using linear transformations. For $L:\V\to\V$ an automorphism, the following proposition relates $\N (V_1,\cdots,V_{N-1})$ with $\N (LV_1, \cdots, LV_{N-1})$. 

\begin{proposition}\label{prop:crossprodaut}
    For $L\in \text{Aut}(\V)$ and $(V_1, \cdots, V_{N-1})$ a family of linearly independent vectors, the operator $\N$ defined in \eqref{eq:crossprod} satisfies the transformation rule
    \begin{align}
	\N (LV_1, \cdots, LV_{N-1}) = (\det L) L^{-T} \N(V_1,\cdots,V_{N-1}). 
	\label{eq:crossprodaut}
    \end{align}
\end{proposition}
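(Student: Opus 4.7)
The approach is to use a dual characterization of $\N$ and then test the claimed identity against an arbitrary vector. Since the excerpt tells us $\N$ is independent of the basis used in its definition, I would fix an orthonormal basis $(\bfe_1, \dots, \bfe_N)$, so that $\det(\bfe_1, \dots, \bfe_N) = 1$ and the coordinate-entries in the definition \eqref{eq:crossprod} become honest components of the vectors. Expanding $\langle \N(V_1, \dots, V_{N-1}), W \rangle$ as a cofactor expansion along the last row of the formal determinant, and substituting $\langle W, \bfe_k \rangle$ for $\bfe_k$ in that row, produces the scalar determinant of the $N \times N$ matrix whose rows are the coordinates of $V_1, \dots, V_{N-1}, W$. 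In other words, the characterizing identity is
\begin{equation*}
\langle \N(V_1, \dots, V_{N-1}), W \rangle = \det(V_1, \dots, V_{N-1}, W), \qquad W \in \V,
\end{equation*}
which makes the proof a two-line affair.

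Applying this to $\N(LV_1, \dots, LV_{N-1})$ and using the multilinearity of $\det$, I would write $W = L(L^{-1}W)$ in the last column, factor out $\det L$, and then use the identity in reverse:
\begin{equation*}
\langle \N(LV_1, \dots, LV_{N-1}), W \rangle = \det(LV_1, \dots, LV_{N-1}, LL^{-1}W) = (\det L)\, \langle \N(V_1, \dots, V_{N-1}), L^{-1}W \rangle.
\end{equation*}
The adjoint relation $\langle X, L^{-1}W \rangle = \langle L^{-T} X, W \rangle$ turns the right-hand side into $(\det L)\, \langle L^{-T} \N(V_1,\dots,V_{N-1}), W\rangle$. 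Since the resulting equality of inner products holds for every $W \in \V$, nondegeneracy of $\langle \cdot, \cdot \rangle$ yields the claimed formula \eqref{eq:crossprodaut}.

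The only subtle point is justifying the characterizing identity in the first step; this is where one has to be careful that the normalization $1/\det(\bfe_1,\dots,\bfe_N)$ in \eqref{eq:crossprod} correctly compensates for a non-orthonormal basis. The cleanest route is to invoke the already-stated basis-independence of $\N$ to reduce to the orthonormal case, where the identification of the cofactor expansion with $\det(V_1, \dots, V_{N-1}, W)$ is immediate. Once this is in hand, the rest of the argument is essentially formal and parallels the classical proof that the ordinary cross product in $\Rm^3$ transforms as a pseudovector under linear change of coordinates.
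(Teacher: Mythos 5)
Your proof is correct, but it runs along a different track than the paper's. The paper argues directly on the defining formula \eqref{eq:crossprod}: it rewrites each entry $\dprod{LV_i}{\bfe_k}$ as $\dprod{V_i}{L^T\bfe_k}$, pulls $L^{-T}$ out of the last row via $\bfe_k = L^{-T}(L^T\bfe_k)$, adjusts the normalization by $\det(L^T\bfe_1,\dots,L^T\bfe_N) = (\det L)\det(\bfe_1,\dots,\bfe_N)$, and then recognizes $\N(V_1,\dots,V_{N-1})$ written in the transformed basis $(L^T\bfe_1,\dots,L^T\bfe_N)$, invoking the basis-independence of $\N$ to conclude. You instead characterize $\N$ weakly by the duality relation $\dprod{\N(V_1,\dots,V_{N-1})}{W} = \det(V_1,\dots,V_{N-1},W)$ for all $W$, then use multiplicativity of the determinant, the adjoint identity $\dprod{X}{L^{-1}W} = \dprod{L^{-T}X}{W}$, and nondegeneracy of the inner product. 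Both arguments lean on the same underlying facts (the adjoint of $L$ and the transformation of the volume factor), and both implicitly or explicitly use the basis-independence asserted after \eqref{eq:crossprod}; what your route buys is a cleaner conceptual statement — $\N$ is the Riesz representative of $\det(V_1,\dots,V_{N-1},\cdot)$ — from which the transformation rule is an immediate formal consequence, and which also gives \eqref{eq:crossprodnorm} and the orthogonality property for free, whereas the paper's computation stays closer to the explicit formula that is actually used downstream in Corollary \ref{cor:matrel}. Your cautionary remark about the normalization in a non-orthonormal basis is handled correctly by reducing to the orthonormal, positively oriented case; note also that linear independence of the $V_i$ is not actually needed for either argument, since both sides of \eqref{eq:crossprodaut} vanish in the dependent case.
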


\begin{proof}
    Direct computations yield, picking a basis $(\bfe_1,\cdots,\bfe_N)$ 
    \begin{align*}
	\N(LV_1,\cdots,LV_{N-1}) &= \frac{1}{\det(\bfe_1,\cdots,\bfe_N)} \left|
	\begin{array}{ccc}
	    \dprod{LV_1}{\bfe_1} & \cdots & \dprod{LV_1}{\bfe_N} \\ 
	    \vdots & \vdots & \vdots \\
	    \dprod{LV_{N-1}}{\bfe_1} & \cdots & \dprod{LV_{N-1}}{\bfe_N} \\ 
	    \bfe_1 & \cdots & \bfe_N
	\end{array}
	\right| \\
	&= \frac{\det L}{\det (L^T\bfe_1,\cdots, L^T\bfe_N)} L^{-T} \left|
	\begin{array}{ccc}
	    \dprod{V_1}{L^T\bfe_1} & \cdots & \dprod{V_1}{L^T\bfe_N} \\ 
	    \vdots & \vdots & \vdots \\
	    \dprod{V_{N-1}}{L^T\bfe_1} & \cdots & \dprod{V_{N-1}}{L^T\bfe_N} \\ 
	    L^T\bfe_1 & \cdots & L^T\bfe_N
	\end{array}
	\right|,
    \end{align*}
    where we recognize $\N(V_1,\cdots,V_{N-1})$ expressed in the basis $(L^{T}\bfe_1,\cdots,L^{T}\bfe_N)$, hence the result. 
\end{proof}

We are now interested in the case where $\V = \M_n(\Rm)$ with the inner product $\dprod{M_1}{M_2} = \tr (M_1 M_2^T)$, and where the automorphism $L_A$ denotes left-multiplication by a non-singular matrix $A$. First of all, it is straightforward to see that $L^T_A = L_{A^T}$ and $L_A^{-1} = L_{A^{-1}}$, where $^T$ and $^{-1}$ on the right-hand side denote regular matrix transposition and inversion.

With $(\bfe_1,\cdots,\bfe_n)$ the canonical basis of $\Rm^n$, the family $E_{ij} = \bfe_i\otimes \bfe^j$ for $1\le i,j\le n$ is an orthonormal basis for $M_n(\Rm)$ and we define the orientation on $\M_n(\Rm)$ by
\begin{align*}
    \det (E_{11},\cdots,E_{n1}, \cdots, E_{1n}, \cdots, E_{nn}) = 1.
\end{align*}
Now, if we represent the vectors $AE_{ij}$ in the above oriented basis, we see that 
\begin{align*}
    \det L_A = \det_{\M_n(\Rm)} (AE_{11},\cdots,AE_{n1}, \cdots, AE_{1n}, \cdots, AE_{nn}) = \det \left|
    \begin{array}{ccc}
	A & 0 & 0 \\ 0 & \ddots & 0 \\ 0 & 0 & A
    \end{array}
    \right| = (\det_{\Rm^n} A)^n.
\end{align*}

This brings us to the relation of interest:
\begin{corollary}\label{cor:matrel}
    For $(M_{1}, \dots, M_{n^2-1})\in M_n(\Rm)$, $A\in Gl_n(\Rm)$ and $\N$ defined as in \eqref{eq:crossprod}, we have the following transformation rule:
    \begin{align}
	\N(AM_1,\dots,AM_{n^2-1}) = (\det A)^n A^{-T} \N(M_1,\dots,M_{n^2-1}).
	\label{eq:Ntransf}
    \end{align}
\end{corollary}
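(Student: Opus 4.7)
The plan is to recognize the corollary as a direct specialization of Proposition \ref{prop:crossprodaut} to the inner product space $(\V, \dprod{\cdot}{\cdot}) = (M_n(\Rm), \tr(\cdot\, (\cdot)^T))$ together with the automorphism $L_A$ defined by left-multiplication by $A$. All the heavy lifting, namely the determinant-based transformation rule for $\N$ under an automorphism, has already been done in Proposition \ref{prop:crossprodaut}; what remains is bookkeeping on how the two ingredients $\det L_A$ and $L_A^{-T}$ act in this particular space.

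First I would invoke Proposition \ref{prop:crossprodaut} with $L = L_A$ and the $N-1 = n^2-1$ vectors $M_1,\dots,M_{n^2-1}$, yielding
\begin{align*}
\N(AM_1,\dots,AM_{n^2-1}) = \N(L_A M_1,\dots,L_A M_{n^2-1}) = (\det L_A)\, L_A^{-T}\, \N(M_1,\dots,M_{n^2-1}).
\end{align*}
Then I would plug in the two identifications established just above the statement: the computation using the canonical basis $\{E_{ij}\}$ already gives $\det L_A = (\det A)^n$ (the block-diagonal argument in the excerpt), and the identity $L_A^T = L_{A^T}$ (already recorded before the corollary), which implies $L_A^{-T} = L_{A^{-T}}$; consequently $L_A^{-T}$ acts on a matrix $M$ simply by left-multiplication by $A^{-T}$. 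Substituting both identifications into the displayed equality gives exactly \eqref{eq:Ntransf}.

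For completeness I would briefly justify $L_A^T = L_{A^T}$ with one line: for any $M_1, M_2 \in M_n(\Rm)$,
\begin{align*}
\dprod{L_A M_1}{M_2} = \tr(A M_1 M_2^T) = \tr(M_1 M_2^T A) = \tr(M_1 (A^T M_2)^T) = \dprod{M_1}{L_{A^T} M_2},
\end{align*}
which identifies the adjoint of $L_A$ with respect to the Frobenius inner product. There is no genuine obstacle here, since both ingredients have essentially been computed in the paragraphs preceding the corollary; the only mild subtlety is keeping track of the transpose inside the inner product so as not to confuse $L_{A^{-T}}$ with $L_{(A^{-1})^T}$, but these coincide, so the final formula $(\det A)^n A^{-T} \N(M_1,\dots,M_{n^2-1})$ drops out unambiguously.
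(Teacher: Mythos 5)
Your proposal is correct and matches the paper's intended argument exactly: the corollary is obtained by specializing Proposition \ref{prop:crossprodaut} to $\V = M_n(\Rm)$ with $L = L_A$, and inserting the identities $\det L_A = (\det A)^n$ and $L_A^{T} = L_{A^T}$ (hence $L_A^{-T} = L_{A^{-T}}$) that the paper records just before the statement. Your one-line trace computation verifying the adjoint identity is the same bookkeeping the paper relies on, so there is no substantive difference in approach.
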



\bibliographystyle{siam}
\bibliography{../bibliography/bibliography}

\begin{thebibliography}{10}

\bibitem{Alessandrini2001}
{\sc G.~Alessandrini and V.~Nesi}, {\em Univalent $e^\sigma$-harmonic
  mappings}, Arch. Rat. Mech. Anal., 158 (2001), pp.~155--171.

\bibitem{Ammari2008}
{\sc H.~Ammari, E.~Bonnetier, Y.~Capdeboscq, M.~Tanter, and M.~Fink}, {\em
  {E}lectrical {I}mpedance {T}omography by elastic deformation}, SIAM J. Appl.
  Math., 68 (2008), pp.~1557--1573.

\bibitem{Astala2005}
{\sc K.~Astala, M.~Lassas, and L.~P\"aiv\"arinta}, {\em Calder\'on's inverse
  problem for anisotropic conductivity in the plane}, Comm. in Partial Diff.
  Eq., 30 (2005), pp.~207--224.

\bibitem{Bal2012d}
{\sc G.~Bal}, {\em Cauchy problem for ultrasound modulated {E}{I}{T}},
  submitted,  (2012).

\bibitem{Bal2011a}
{\sc G.~Bal, E.~Bonnetier, F.~Monard, and F.~Triki}, {\em Inverse diffusion
  from knowledge of power densities}, to appear in Inverse Problems and
  Imaging,  (2012).
\newblock {arXiv:1110.4577}.

\bibitem{Bal2012e}
{\sc G.~Bal, F.~Monard, and C.~Guo}, {\em Linearization of the inverse
  conductivity problem from power density functionals}, In preparation.,
  (2012).

\bibitem{Bal2010d}
{\sc G.~Bal and J.~C. Schotland}, {\em Inverse scattering and acousto-optic
  imaging}, Phys. Rev. Letters, 104 (2010), p.~043902.

\bibitem{Bal2012}
{\sc G.~Bal and G.~Uhlmann}, {\em Reconstruction of coefficients in scalar
  second-order elliptic equations from knowledge of their solutions},
  submitted,  (2012).

\bibitem{Briane2004}
{\sc Briane, Milton, and V.~Nesi}, {\em Change of sign of the corrector's
  determinant for homogenization in three-dimensional conductivity}, Arch. Rat.
  Mech. Anal., 173 (2004), pp.~133--150.

\bibitem{calderon}
{\sc A.~Calder\'on}, {\em On an inverse boundary value problem}, Seminar on
  Numerical Analysis and its Applications to Continuum Physics, Soc. Brasileira
  de Matematica, Rio de Janeiro,  (1980), pp.~65--73.

\bibitem{Capdeboscq2009}
{\sc Y.~Capdeboscq, J.~Fehrenbach, F.~de~Gournay, and O.~Kavian}, {\em Imaging
  by modification: Numerical reconstruction of local conductivities from
  corresponding power density measurements}, SIAM Journal on Imaging Sciences,
  2 (2009), pp.~1003--1030.

\bibitem{Garofalo1986}
{\sc N.~Garofalo and F.-H. Lin}, {\em {M}onotonicity properties of variational
  integrals, $a_p$ weights and unique continuation}, Indiana University
  Mathematics Journal, 35 (1986), pp.~245--268.
\newblock Geometric techniques to prove strong unique continuation for
  divergence-form elliptic equations. Doubling inequality.

\bibitem{Gebauer2009}
{\sc B.~Gebauer and O.~Scherzer}, {\em {I}mpedance-{A}coustic {T}omography},
  SIAM J. Applied Math., 69 (2009), pp.~565--576.

\bibitem{Gilbarg2001}
{\sc D.~Gilbarg and N.~S. Trudinger}, {\em Elliptic Partial Differential
  Equations of Second Order}, Springer-Verlag, 2001.

\bibitem{Kuchment2010}
{\sc P.~Kuchment and L.~Kunyansky}, {\em Synthetic focusing in ultrasound
  modulated tomography}, Inverse Probl. Imaging, 4 (2010), pp.~665--673.

\bibitem{Kuchment2011a}
{\sc P.~Kuchment and L.~Kunyansky}, {\em 2d and 3d reconstructions in
  acousto-electric tomography}, Inverse Problems, 27 (2011).

\bibitem{Kuchment2011}
{\sc P.~Kuchment and D.~Steinhauer}, {\em Stabilizing inverse problems by
  internal data},  (2011).
\newblock arXiv:1110.1819.

\bibitem{Laugesen1996}
{\sc R.~Laugesen}, {\em Injectivity can fail for higher-dimensional harmonic
  extensions.}, Complex Var. Theory Appl., 28 (1996), pp.~357--369.

\bibitem{Lax1956}
{\sc P.~D. Lax}, {\em A stability theorem for solutions of abstract
  differential equations, and its application to the study of local behavior of
  solutions of elliptic equations}, Communications on Pure and Applied
  Mathematics, IX (1956), pp.~747--766.

\bibitem{Monard2012b}
{\sc F.~Monard}, {\em {T}aming unstable inverse problems. {M}athematical routes
  toward high-resolution medical imaging modalities}, PhD thesis, Columbia
  University, 2012.

\bibitem{Monard2011}
{\sc F.~Monard and G.~Bal}, {\em Inverse anisotropic diffusion from power
  density measurements in two dimensions}, Inverse Problems, 28 (2012),
  p.~084001.
\newblock arXiv:1110.4606.

\bibitem{Monard2011a}
\leavevmode\vrule height 2pt depth -1.6pt width 23pt, {\em Inverse diffusion
  problems with redundant internal information}, Inv. Probl. Imaging, 6 (2012),
  pp.~289--313.
\newblock arXiv:1106.4277.

\bibitem{Nachman2011}
{\sc A.~Nachman, A.~Tamasan, and A.~Timonov}, {\em Current density impedance
  imaging}, AMS series in Contemporary Mathematics, 559 (2011).

\bibitem{Seo2011}
{\sc J.~K. Seo and E.~J. Woo}, {\em Magnetic resonance electrical impedance
  tomography ({MREIT})}, SIAM Review, 53 (2011), pp.~40--68.

\end{thebibliography}

\end{document}